\title%
[A Ghost Lemma for Commutative Ring Homomorphisms]%
{A Ghost Lemma for Commutative Ring Homomorphisms via Andr\'{e}-Quillen Homology}
\date{July 18, 2025}
\author[D.~McCormick]{Daniel McCormick}
\address{Department of Mathematics,
  Stockholms Universitet, Stockholm, Sweden}
\email{daniel.mccormick@math.su.se}
\thanks{The author was partly supported by the National Science
  Foundation under grant No. 2001368 and No. 1840190.}
\keywords{\andq\ homology, simplicial ring, complete intersection
  dimension, Koszul complex, Frobenius endomorphism, relative
  Frobenius, Kunz's theorem, regular ring, local ring, ghost lemma, ghost map,
  cotangent complex, homotopical algebra}
\subjclass[2020]{13D03 (primary); 13D05, 13D09, 13H05, 13H10}
\begin{document}
\begin{abstract}
  We adapt the theory of ghost maps from derived categories to the
  setting of commutative rings using \andq\ homology. The Frobenius
  endomorphism is a primary example of a ghost map in this setting. We
  prove an analogue of the ghost lemma for rings and demonstrate its
  utility by deducing a characteristic independent generalization of
  Kunz's theorem and an analogue for complete intersection rings.
\end{abstract}
\maketitle
\section{Introduction}
We often study objects, such as chain complexes or topological spaces,
by studying their homology. While an object with trivial homology is
homologically trivial, a map between objects may be trivial on
homology even while carrying nontrivial homological information. These
are known as ghost maps and are the subject of this
paper. Ghost maps have been studied in the derived categories of
modules, the stable homotopy category of spaces, as well as in general
triangulated categories \cite{Christensen98, Beligiannis08, Kelly65,
  Letz21, LiuPollitz23, OppermannStovicek12}. The goal of this paper
is to introduce the study of ghost maps to commutative rings.

The derived category of modules is constructed from chain complexes in
order to apply the ideas of homotopy theory to the study of
modules. In this setting, a map of chain complexes \(f \co X \to Y\)
is ghost if \(\h_*(f) = 0\). In a similar fashion, following the work
of Quillen, we use simplicial rings to study the homotopy
theory of rings, providing us with analogues for many of the familiar
constructions and ideas from the derived category, such as free
resolutions, derived functors, etc.

The natural theory of homology for commutative rings, introduced
independently by Andr\'{e} \cite{Andre74} and Quillen
\cite{QuillenHCR,Quillen70}, is known as \andq\ homology and denoted
\(\AQ\). Under the conditions discussed in \cref{sec:aq} we
establish the following definition: if \(f \co R \to S\) is a map
of (simplicial) rings, then \(f\) is \emph{ghost} if \(\AQ_*(f) =
0\). Interpreted appropriately, the Frobenius endomorphism,
\(F(x) = x^p\), defined for any ring of prime characteristic \(p\),
gives a quintessential example of a ghost map.

Within the study of ghost maps, there is a family of important
technical results known as ghost lemmas. If \(R\) is a ring, and
\(f \co X \to Y\) is a map of chain complexes of \(R\)-modules, we
have the following prototypical example, originally due to Kelly
\cite{Kelly65} (see also \cite[Theorem 8.3]{Christensen98}).
\begin{quote}
  \textit{Let \(X\) be a complex of \(R\)-modules such that
    \(\h_i(X)\) and \(\mathsf{B}_i(X)\) have projective dimension less
    than \(n\) for each \(i\). If \(f \co X \to Y\) is a composite
    of at least \(n\) ghost maps, then \(f\) is zero up to homotopy.}
\end{quote}
By analogy, we prove the following ghost lemma for rings.
\begin{introtheorem}\label[introtheorem]{thm:intro-ghost-ring}
  Suppose \(k\) is a field and \(S\) is an augmented connected
  simplicial \(k\)-algebra such that the \(i\)\tth\ homotopy group
  \(\pi_i(S)\) vanishes for all \(i \ge 2^n\). If \(f \co R \to S\) is
  a composite of at least \(n\) ghost maps, then \(f\) is trivial up
  to homotopy.
\end{introtheorem}\noindent
In this context, a map of \(k\)-algebras is trivial if it factors
through \(k\).  The homological information encoded by trivial ring
maps has long been used to study singularities, dating at least back
to the Auslander-Buchsbaum-Serre theorem, which can be phrased as
follows:
\begin{quote}
  \textit{Suppose \(R\) is a local ring with residue field \(k\). Then
    \(R\) is regular if and only if any/all trivial ring maps
    \(R \to S\) have finite flat dimension.}
\end{quote}
By contrast, Kunz's theorem characterizes regularity via the
homological properties of the Frobenius endomorphism:
\begin{quote}
  \textit{Suppose \(R\) is a ring with prime characteristic. Then
    \(R\) is regular if and only if the Frobenius endomorphism has
    finite flat dimension.}
\end{quote}
However, the Frobenius endomorphism is ghost. Passing to the
simplicial Koszul complex gives us a ghost map on a connected
simplicial \(k\)-algebra with bounded homotopy groups, and so we use
\cref{thm:intro-ghost-ring} to reduce Kunz's theorem to a case of the
Auslander-Buchsbaum-Serre theorem. Generalizing the proof gives us the
following theorem.
\begin{introtheorem}\label[introtheorem]{thm:intro-kunzy}
  Let \(R\) be a local ring and \(\phi\) a local endomorphism such
  that the induced map on the simplicial Koszul complex is ghost. If
  \(\phi\) has finite flat dimension (e.g. \(\phi\) is flat), then
  \(R\) is regular.
\end{introtheorem}
Further, using similar methods we prove a analogous results for
complete intersections.
\begin{introtheorem}\label[introtheorem]{thm:intro-kunzy-ci}
  Let \(R\) be a local ring and \(\phi\) a local endomorphism such
  that the induced map on the simplicial Koszul complex is ghost. If
  \(\phi^n\) has finite CI dimension for some \(n \gg 0\), then \(R\) is
  complete intersection.
\end{introtheorem}

\cref{thm:intro-kunzy} can also be proven using the work of Majadas
\cite{Majadas16} and \cref{thm:intro-kunzy-ci} is closely related to
another result in \cite{Majadas16}. See \cref{rem:majadas} for a more
detailed comparison.
\newpage
\begin{outline} The remainder of the paper is organized in the
  following way.
  \begin{itemize}
      \item In \cref{sec:cats}, we discuss the categorical tools that are
    employed to handle the homotopy theory of simplicial rings. This
    includes a discussion of functorial cylinder objects and simplicial
    model categories.
      \item In \cref{sec:aq}, we define \andq\ homology, and cover some basic
    properties and examples.
      \item In \cref{sec:frob-koszul}, we describe the relative
    frobenius and Koszul complex in the simplicial setting.
      \item In \cref{sec:ghost-maps} we define ghost maps, and cover
    some properties and examples.
      \item In \cref{sec:ghost-proof}, we prove
    \cref{thm:intro-ghost-ring}.
      \item In \cref{sec:singularities}, we review CI-dimension, prove
    \cref{thm:intro-kunzy} from which we deduce Kunz's theorem, and prove
    \cref{thm:intro-kunzy-ci}.
  \end{itemize}
\end{outline}

\begin{acknowledgements}
  This paper was produced in part as a PhD student at the University
  of Utah under the supervision of Srikanth Iyengar and Benjamin
  Briggs. I would like to thank both of them for their substantial
  guidance and review, without which this paper would not exist. I
  would also like to thank Peter McDonald for helping me detect a flaw
  in an earlier draft of this paper.
\end{acknowledgements}
\section{Categorical Homotopy Theory}\label{sec:cats}
In this section we cover the categorical tools that will be used to
work with homotopies. We assume basic familiarity with model
categories and simplicial methods. Some useful references include
\cite{
  DwyerSpalinski95,
  GoerssJardine99,
  GoerssSchemmerhorn07,
  Riehl11,
  Riehl14,
  Quillen67
}.

\begin{notation}\label[notation]{not:cats} Let \(\ctC\) be a category.
  \begin{itemize}
      \item The category of simplicial objects in \(\ctC\) is denoted
    \(\cat{sC}\).
      \item The coproduct of \(X\) and \(Y\) in \(\ctC\) is denoted \(X \sqcup Y\).
      \item For any \(X \in \ctC\), let \(\ctC^X\) denote the under
    category (or coslice category) of \(X\), i.e. the category whose
    objects are maps \(X \xto{f} Y\) with domain \(X\) and whose maps
    are commutative triangles of the form
    \[
      \begin{tikzcd}[column sep=tiny]
        & X \ar[dl, "f"'] \ar[dr, "g"] & \\
        Y\ar[rr, "\alpha"'] && Z.
      \end{tikzcd}
    \]
    Dually, \(\ctC_X\) denotes the over (or slice) category of \(X\).
  \end{itemize}
\end{notation}

\begin{definition}\label[definition]{def:cyl-functor}
  Let \(\ctC\) be a model category. We say \(\Cyl \co \ctC \to \ctC\) is
  a cylinder functor if it satisfies the following conditions:
  \begin{itemize}
      \item \(\Cyl\) preserves colimits, cofibrations, and acyclic
    cofibrations (i.e. \(\Cyl\) is left-Quillen \cite[11.3]{Riehl14}).
      \item The fold map \(X \sqcup X \to X\) naturally
    factors through \(\Cyl X\)
    \[
      \begin{tikzcd}
        X \sqcup X \ar[r] & \Cyl X \ar[r, "\rho_X"] & X.
      \end{tikzcd}
    \]
      \item If \(X\) is cofibrant, then \(\rho_X \co \Cyl X \to X\) is
    a weak equivalence.
  \end{itemize}
  For a pair of maps \(f,g \co X \to Y\), we say \(f\) is
  \(\Cyl\)-homotopic to \(g\), written \(f \Cylsim g\), if there
  exists a map \(H \co \Cyl X \to Y\) making the following diagram
  commute.
  \[
    \begin{tikzcd}
      X \sqcup X \ar[r, "{(f,g)}"] \ar[d] & Y \\
      \Cyl X \ar[ur, dashed, "H"']
    \end{tikzcd}
  \]
\end{definition}
\begin{remark}\label[remark]{rem:cyl-ho}
  Let \(f,g \co X \to Y\) and suppose \(f \Cylsim g\).  For any map
  \(h \co X' \to X\) we obtain \(fh \Cylsim gh\) by restriction.
  \[
    \begin{tikzcd}
      X' \sqcup X' \ar[r] \ar[d] &  X \sqcup X \ar[r, "{(f,g)}"] \ar[d] & Y \\
      \Cyl X' \ar[r] & \Cyl X \ar[ur, dashed, "H"']
    \end{tikzcd}
  \]
  Furthermore, if we choose \(h \co X' \ontosim X\) to be a cofibrant
  replacement, we observe that \(f\) and \(g\) induce the same arrow
  in \(\cat{Ho}(\ctC)\).
\end{remark}

A category is pointed if it has an initial and terminal object and
these objects are isomorphic. An object with this property is called a
\emph{zero} object. If \(\ctC\) is pointed, then for any
\(X,Y \in \ctC\) the unique map factoring through the zero object is
called the \emph{trivial} map.
\begin{definition}\label[definition]{def:nullhomotopic}
  Let \(\ctC\) be a pointed model category and suppose \(f\) is a map
  in \(\ctC\). We say \(f\) is \emph{nullhomotopic} if it is trivial
  in \(\Ho(\ctC)\).
\end{definition}
\begin{remark}\label[remark]{rem:nullhomotopic}
  The definition given above does not require the existence of an
  explicit homotopy between \(f\) and the trivial map.
\end{remark}

\begin{definition}\label[definition]{def:simp-model-cat} A category \(\ctC\) is a
  \emph{simplicial category} if it is enriched, powered, and copowered
  in \(\sSet\)
  \begin{alignat*}{6}
    &\text{(enrichment)}\quad & \Map      \co & \ctC^\op  &&\times \ctC \to \sSet \\
    &\text{(power)}     \quad & [-,-]     \co & \sSet^\op &&\times \ctC \to \ctC \\
    &\text{(copower)}   \quad & - \odot - \co & \sSet     &&\times \ctC \to \ctC
  \end{alignat*}
  such that the triple \((\odot, [-,-], \Map)\) is a two variable adjunction,
  \[
    \ctC(K \odot X, Y)
    \cong \ctC(X, [K, Y])
    \cong \sSet(K,\Map(X,Y)).
  \]
  If \(\ctC\) is additionally a model category, then it is a
  \emph{simplicial model category} provided \((\odot, [-,-], \Map)\)
  is a two variable Quillen adjunction \cite[11.4.4]{Riehl14},
  \cite[II.3.1]{GoerssJardine99}.
\end{definition}

\begin{remark}\label{rem:sC-simp-model-cat}
  If \(\ctC\) is complete and cocomplete, then the category
  \(\cat{sC}\) of simplicial \(\ctC\)-objects is naturally endowed
  with the structure of a simplicial category where
  \[ (K \odot X)_n = \bigsqcup_{\sigma \in K_n} X_n \]
  and
  \[ \Map(X,Y)_n = \Hom_{\cat{sC}}(\Delta^n \odot X, Y) \]
  \hspace{-1em}\hspace{1em}\cite[II.2.5]{GoerssJardine99}.
\end{remark}

\begin{remark}\label{rem:cyl-simp}
  If \(\ctC\) is a simplicial model category, then
  \(\Cyl X \ceq \Delta^1 \odot X\) is a cylinder functor
  \cite[II.3.5]{GoerssJardine99}. Since \(\odot\) is a left Quillen
  bifunctor, if \(X\) is cofibrant, then \(X \odot -\) preserves
  acyclic cofibrations in \(\sSet\). The inclusions of the end points
  \(* \intosim \Delta^1\) are acyclic cofibrations, and hence the
  inclusions \(i_0, i_1 \co X \intosim \Cyl X\) are as
  well. Additionally, if \(X\) is cofibrant and \(Y\) is fibrant, then
  for any pair of maps \(f,g \co X \to Y\), if \(f = g\) in
  \(\Ho(\ctC)\), then \(f \Cylsim g\) \cite[II.3.6]{GoerssJardine99}.
\end{remark}

In all cases considered throughout this paper, there is a canonical
forgetful functor \(U \co \ctC \to \cat{Set}_*\) to pointed sets. The
extension \(U \co \cat{sC} \to \sSet_*\) allows one to define homotopy
groups in \(\cat{sC}\).

\begin{definition}\label[definition]{def:homotopy-groups}
  Let \(\ctC\) be a category with a forgetful functor
  \(U \co \ctC \to \cat{Set}_*\). For \(X \in \cat{sC}\), the
  \(i\)\tth\ homotopy group is defined to be
  \(\pi_i(X) \ceq \pi_i(UX)\). We say \(X\) is \emph{connected} if
  \(UX\) is connected, i.e. \(\pi_0(X) = *\).
\end{definition}

\begin{remark}\label[remark]{rem:pointed-homotopy-groups}
  If \(U \co \ctC \to \cat{Set}\) is a forgetful functor to the
  category of (non-pointed) sets with left adjoint
  \(F \co \cat{Set} \to \ctC\), and \(X\) is a simplicial
  \(\ctC\)-object, then one can define the \(i\)\tth homotopy group of
  \(X\) at the base point \(x \co F(*) \to X\) by
  \(\pi_i(X,x) \ceq \pi_i(UX, ux)\), where \(ux\) is the point in
  \(UX\) corresponding to the conjugate map \(x^\dag \co * \to UX\).
\end{remark}

\begin{lemma}\label[lemma]{lem:2cat-equiv}
  Let \(F \co \cat{C} \to \cat{D}\) be an equivalence of categories.
  If \(G,H \co \cat{D} \to \cat{C}\) are inverses to \(F\), then
  \(G \cong H\).
\end{lemma}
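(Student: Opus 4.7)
The plan is to chain together the natural isomorphisms witnessing that $G$ and $H$ are each inverse to $F$. By hypothesis we have natural isomorphisms
\[ \eta_G \co \id_\cat{C} \xto{\cong} GF, \quad \eps_G \co FG \xto{\cong} \id_\cat{D}, \]
\[ \eta_H \co \id_\cat{C} \xto{\cong} HF, \quad \eps_H \co FH \xto{\cong} \id_\cat{D}, \]
(the precise direction of the arrows is immaterial since all are invertible).

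From these, I would construct a natural isomorphism $G \cong H$ as the composite
\[ G \xto{\cong} G \circ \id_\cat{D} \xto{G \eps_H^{-1}} G F H \xto{\eta_G^{-1} H} \id_\cat{C} \circ H \xto{\cong} H, \]
using that whiskering a natural isomorphism by a functor yields a natural isomorphism, and that the composition of natural isomorphisms is a natural isomorphism. Each arrow in this chain is invertible, so the composite is an isomorphism $G \cong H$ in the functor category $\cat{D} \to \cat{C}$.

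There is no genuine obstacle here; the only thing to be careful about is the bookkeeping of whiskering natural transformations by functors, which is standard. The argument does not require coherence (i.e., no triangle identities for the adjunction $F \dashv G$ are used), only that ``inverse to $F$'' means there exist natural isomorphisms in both directions.
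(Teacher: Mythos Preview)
Your proof is correct and follows essentially the same approach as the paper: both arguments produce the chain \(G \cong GFH \cong H\) by whiskering the natural isomorphisms \(FH \cong 1_{\cat D}\) and \(GF \cong 1_{\cat C}\). The paper presents this as a single pasting diagram, while you spell out the whiskerings explicitly, but the content is identical.
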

\begin{proof} Composing \(1_{\cat{D}} \cong FH\) and \(1_{\cat{C}} \cong GF\) as in
  the following diagram yields \(G \cong H\) as desired,
  \[
    \begin{tikzcd}[anchor=south]
      \cat{D}
      \ar[rr, bend left=45, "1_{\cat{D}}", ""'{name=B}]
      \ar[r, "H"]
      &
      \cat{C} \ar[to=B, marking, phantom, "\cong"] 
      \ar[r, "F"]
      \ar[rr, bend right=45, "1_{\cat{C}}"', ""{name=A}]
      &
      \cat{D} \ar[to=A, marking, phantom, "\cong"]
      \ar[r, "G"]
      &
      \cat{C}.
    \end{tikzcd}\qedhere
  \]
\end{proof}

\begin{lemma}\label[lemma]{lem:pullback-equiv}
  Suppose \(\ctC\) is a model category with cylinder functor
  \(\Cyl\). Fix a pair of maps \(f,g \co X \to Y\) with
  \(f \Cylsim g\). If \(X\) is cofibrant, then there is a natural
  isomorphism \(\LL f^* \cong \LL g^* \co \Ho(\ctC^Y) \to \Ho(\ctC^X)\). In
  particular, \(f \simeq g\) as objects in \(\ctC^X\).
\end{lemma}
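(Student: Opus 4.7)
The plan is to factor the desired natural isomorphism through the cylinder. Choose a cylinder homotopy $H \co \Cyl X \to Y$ witnessing $f \Cylsim g$, so that $H \circ i_0 = f$ and $H \circ i_1 = g$, where $i_0, i_1 \co X \intosim \Cyl X$ are the end-point inclusions of \cref{rem:cyl-simp}. Since precomposition respects composition, as functors $\ctC^Y \to \ctC^X$ we have $f^* = i_0^* \circ H^*$ and $g^* = i_1^* \circ H^*$. It therefore suffices to establish a natural isomorphism $i_0^* \cong i_1^*$ as functors on $\Ho(\ctC^{\Cyl X})$.

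The key observation is that $i_0$ and $i_1$ are both sections of the weak equivalence $\rho \co \Cyl X \to X$, so on the nose $i_k^* \circ \rho^* = \mathrm{id}_{\ctC^X}$ for $k = 0, 1$. Since $X$ is cofibrant, \cref{rem:cyl-simp} tells us that $\Cyl X$ is cofibrant and that $i_0, i_1$ are acyclic cofibrations. I would then argue that each $i_k^*$ descends to an equivalence of homotopy categories: for any acyclic cofibration $\alpha$, the Quillen adjunction $\alpha_! \dashv \alpha^*$ on under-categories is a Quillen equivalence, because pushouts of acyclic cofibrations remain acyclic cofibrations, so the adjunction unit is a weak equivalence on every cofibrant object. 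Granted this, each $i_k^*$ is an equivalence, and the strict identity $i_k^* \circ \rho^* = \mathrm{id}$ upgrades $\rho^*$ to a two-sided quasi-inverse of each $i_k^*$. Applying \cref{lem:2cat-equiv} to $\rho^*$ with inverses $i_0^*$ and $i_1^*$ then yields $i_0^* \cong i_1^*$, whence $\LL f^* \cong \LL g^*$.

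For the final assertion, I would evaluate this natural isomorphism at $\mathrm{id}_Y \in \ctC^Y$: unwinding definitions, $f^*(\mathrm{id}_Y)$ is the object $(X \xto{f} Y)$ and $g^*(\mathrm{id}_Y)$ is $(X \xto{g} Y)$ in $\ctC^X$, so the natural isomorphism gives $f \simeq g$ in $\Ho(\ctC^X)$. The main obstacle here is the standard but nontrivial claim that $\alpha_! \dashv \alpha^*$ is a Quillen equivalence of under-categories whenever $\alpha$ is an acyclic cofibration; everything else is formal bookkeeping with \cref{lem:2cat-equiv} and the universal property of the cylinder.
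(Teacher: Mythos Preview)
Your argument is correct and follows essentially the same route as the paper: factor through the homotopy $H \co \Cyl X \to Y$, use $\rho \circ i_k = 1_X$ to recognize $\LL i_0^*$ and $\LL i_1^*$ as inverses to $\LL \rho^*$, and invoke \cref{lem:2cat-equiv}. Your justification that $i_k^*$ is a Quillen equivalence via the acyclic-cofibration argument is slightly more explicit than the paper's, and your unpacking of the ``in particular'' clause by evaluating at $\mathrm{id}_Y$ is a helpful addition, but the architecture is the same.
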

\begin{proof}
  The cylinder diagram witnessing \(f \Cylsim g\) yields the following
  diagram of restriction functors.
  \[
    \begin{tikzcd}
      \ctC^X
      &
      \ctC^{\Cyl X}
      \ar[l, shift right, "i_0^*"']
      \ar[l, shift left, "i_1^*"]
      &
      \ctC^X
      \ar[l, "\rho_X^*"']
    \end{tikzcd}
  \]
  Since \(X\) and \(\Cyl X\) are cofibrant, and each of the maps
  \(i_0\), \(i_1\), and \(\rho\) are weak equivalences, all three
  restriction functors are Quillen equivalences, and so their left
  derived functors induce equivalences between their respective
  homotopy categories. Further, \(\rho i_0 = \rho i_1 = 1_X\), so by
  \cref{lem:2cat-equiv}, \(\LL i_0^* \cong \LL i_1^*\). Precomposing with
  \(\LL h^* \co \Ho(\ctC^Y) \to \Ho(\ctC^{\Cyl X})\) concludes the proof.
\end{proof}

\begin{remark}\label[remark]{rem:left-proper}
  If \(\ctC\) is left proper, the hypothesis in
  \cref{lem:pullback-equiv} that \(X\) is cofibrant can be
  omitted. Left-properness is equivalent to the condition that for any
  weak equivalence \(\widetilde{X} \tosim X\), the restriction
  \(\ctC^X \to \ctC^{\widetilde{X}}\) is a Quillen equivalence. Applying
  \cref{lem:pullback-equiv} to a cofibrant replacement
  \(\widetilde{X} \ontosim X\) thus yields the desired result.
\end{remark}
\section{Simplicial Rings and \andq\ Homology}\label{sec:aq}
In order to apply the techniques of homotopy theory to the study of
modules, we begin by embedding modules into a framework carrying a
suitable version of homotopy theory, i.e. chain complexes. Simplicial
rings provide an analogous framework for studying the homotopy theory
of rings. For more, see
\cite{
  Andre74,
  BasterraMandell05,
  GoerssJardine99,
  GoerssSchemmerhorn07,
  Iyengar07,
  SchwedeShipley00,
  Schwede01,
  QuillenHCR,
  Quillen67,
  Quillen70
}.

\begin{notation}\label[notation]{not:aq}
  In this section, in addition to the conventions of \cref{not:cats},
  we adhere to the following notation. Let \(\Ring\) denote the
  category of commutative unital rings. Fix \(k,\ell \in \Ring\). Any
  notation defined here for \(\Ring\) we define for \(\sRing\) as
  well.
  \begin{itemize}
      \item If \(k \to \ell\) is a map in \(\Ring\), then
    \(\Ring^k_\ell \ceq (\Ring^k)_{k \to \ell} \cong (\Ring_\ell)^{k \to \ell}\)
    has objects \(k \to R \to \ell\) (i.e. \(k\)-algebras with an
    augmentation to \(\ell\)) such that the composite is the specified
    map \(k \to \ell\).
      \item \(\Ring(k) \ceq \Ring^k_k\), the category of supplemented
    \(k\)-algebras (augmented \(k\)-algebras where the composite
    \(k \to k\) is the identity).
      \item If \(R \to S\) and \(R \to T\) are maps of rings, then
    \(S \otimes_R T\) is understood as the pushout in
    \(\Ring\). Likewise, \(S \lotimes_R T\) is the homotopy pushout in
    \(\sRing\).
      \item If \(R \to S\) is a map in \(\sRing_k\), then
    \(S \dslash R \ceq k \lotimes_R S\).
  \end{itemize}
\end{notation}

\begin{remark}\label[remark]{rem:pushouts-ring-mod}
  The pushout \(S \otimes_R T\) in \(\Ring\) corresponds to the usual tensor
  product of \(S\) and \(T\) as \(R\)-modules with the induced ring
  structure. Likewise, homotopy pushouts in \(\sRing\) are
  quasi-isomorphic to the corresponding derived tensor product of
  modules. In particular \(\pi_i(S \lotimes_R T) = \Tor^R_i(S,T)\).
  This can be seen from the fact that, if \(R \into S\) is a cofibrant
  as a simplicial ring map, then \(S\) is cofibrant as a simplicial
  \(R\)-module.
\end{remark}

\begin{remark}\label[remark]{rem:hocofib}
  If \(f \co R \into S\) is a map in \(\sRing(k)\), then
  \(S \dslash R\) is the mapping cone, or homotopy cofiber, of \(f\)
  and fits into the homotopy cofiber sequence
  \[ R \xto{f} S \to S \dslash R. \]
  This construction depends on \(f\).
\end{remark}

Fix a simplicial ring \(k\).  Let \(\Ab(\sRing(k))\) denote the
category of abelian group objects in \(\sRing(k)\) and
\[\Ab \co \sRing(k) \to \Ab(\sRing(k))\]
be the abelianization functor. In \cite{QuillenHCR}, Quillen
establishes the equivalence
\[\Ab(\sRing(k)) \simeq \sMod(k).\]

Following the notion that homology is abelianized homotopy theory, we
arrive at the following definition, independently due to Andr\'{e}
\cite{Andre74} and Quillen \cite{QuillenHCR,Quillen70}.
\begin{definition}\label[definition]{def:andq}
  Fix a simplicial ring \(k\) and let \(R \in \sRing(k)\). Define
  \[\AQ^k(R) \ceq \LL \Ab(R).\]
  If \(M \in \sMod(k)\), then the \(i\)\tth\ \andq\ homology module of
  \(R\) with coefficients in \(M\) is
  \[ \AQ^k_i(R;M) \ceq \pi_i\left(M \lotimes_k \AQ^k(R)\right), \]
  (see \cref{def:homotopy-groups}). When the base ring \(k\) is clear
  from context we write \(\AQ\) in place of \(\AQ^k\). Likewise, if
  \(M = k\), we write \(\AQ_i(R)\) in place of \(\AQ_i(R;M)\).
\end{definition}
\begin{remark}\label[remark]{rem:AQ-complex}
  Under the identification \(\Ab(\sRing(k)) \simeq \sMod(k)\), it can
  be shown that \(\Ab(A) = I/I^2\) where \(I = \ker(A \to k)\).
  Additionally, the Dold-Kan theorem
  \cite[III.2.5]{GoerssJardine99}
  \cite[Sec 2.6]{Quillen67}
  \cite[4.1]{GoerssSchemmerhorn07} provides
  us with the Quillen equivalence
  \[ N \co \sMod(k) \tosim \Ch_{\ge 0}(Nk). \]
  Thus, if \(A \ontosim R\) is a cofibrant replacement and
  \(I = \ker(A \to k)\), we may compute \andq\ homology as the
  homology of a chain complex:
  \[ \AQ_i(R) = \h_i(N(I/I^2)). \]
\end{remark}
\begin{remark}\label[remark]{rem:AQ-cotangent}
  Given a map \(R \to S\) of rings, considered as the terminal map in
  \(\sRing_S\), note that \(S \dslash R = S \lotimes_R S\). We recover
  the cotangent complex as
  \[L_{S|R} = \AQ^S(S \dslash R).\]
  Furthermore, for \(R \in \sRing(k)\),
  \[ \AQ^k(R) = k \lotimes_R L_{R|k}. \]
\end{remark}
\begin{remark}\label[remark]{rem:AQ-local}
  When \(k\) is a field, our definition of \(\AQ\) in \cref{def:andq}
  should be regarded as a fiber-wise definition. Given a map of schemes
  \(f \co X \to Y\), one can construct a complex \(\mathcal{L}_{X|Y}\)
  of sheaves of \(\mathcal{O}_X\) modules whose fiber at
  \(x \co \Spec(k) \to X\) is quasi-isomorphic to
  \(\AQ^k(\mathcal{O}_{X,x} \dslash \mathcal{O}_{Y,f(x)})\). Furthermore,
  if \(X = \Spec(S)\) and \(Y = \Spec(R)\) as in
  \cref{rem:AQ-cotangent}, then \(L_{S|R}\) is quasi-isomorphic to the
  global section of \(\mathcal{L}_{X|Y}\)
  \cite[\href{https://stacks.math.columbia.edu/tag/08T3}{Tag
    08T3}]{StacksProject}.
\end{remark}
\begin{example}\label[example]{ex:AQ-kx/x2}
  Let \(k\) be a ring and \(R = k[x]/(x^2)\). Let \(A\) be the
  simplicial polynomial ring generated by \(x\) in degree \(0\)
  and \(y\) in degree \(1\), where
  \[
    d_i(y) =
    \begin{cases}
      x^2 & i = 0 \\
      0   & i = 1,
    \end{cases}
  \]
  i.e. \( A = k[x, y \mid \partial y = x^2] \). Direct computation
  verifies that \(A \ontosim R\) is a cofibrant replacement. By
  \cref{rem:AQ-complex}, \(\Ab(A)\) is given by
  \[ I/I^2 = (x,y)/(x,y)^2 = kx \oplus ky, \]
  so
  \[ N(I/I^2) = 0 \to k \xto{0} k \to 0, \]
  and thus
  \[
    \AQ_i(R) =
    \begin{cases}
      k & i = 0,1 \\
      0 & i \ge 2.
    \end{cases}
  \]
\end{example}

\begin{construction}\label[construction]{cons:jacobi-zariski}
  \andq\ homology satisfies the Eilenberg-Steenrod axioms for a
  generalized reduced homology theory. In particular, homotopy cofiber
  sequences yield long exact sequences in \andq\ homology. Fix a
  simplicial ring \(k\) and let
  \[R \to S \to T\]
  be a sequence of maps in \(\sRing_k\). Then
  \[ S \dslash R \to T \dslash R \to T \dslash S \]
  is a homotopy cofiber sequence in \(\sRing(k)\). If \(M\) is a
  simplicial \(k\)-module, then the induced long exact sequence
  \[
    \cdots
    \to \AQ^k_i(S \dslash R; M)
    \to \AQ^k_i(T \dslash R; M)
    \to \AQ^k_i(T \dslash S; M)
    \to \AQ^k_{i-1}(S \dslash R; M)
    \to \cdots
  \]
  is known as the \emph{Jacobi-Zariski} sequence (cf. \cite{Iyengar07}).
\end{construction}

\begin{remark}\label[remark]{rem:completion}
  Let \(S\) be a Noetherian local ring with
  residue field \(k\) and let \(\widehat{S}\) be the completion of \(S\)
  at the maximal ideal. Then
  \[\widehat{S} \dslash S = k \lotimes_S \widehat{S} \simeq k \otimes_S \widehat{S} = k,\]
  so \(\AQ^k(\widehat{S} \dslash S) \simeq \AQ^k(k) \simeq 0\). If
  \(R \to S\) is any map of rings, then by applying the Jacobi-Zariski
  sequence from \cref{cons:jacobi-zariski} to \(R \to S \to \widehat{S}\)
  we conclude
  \(\AQ^k(S \dslash R) \simeq \AQ^k(\widehat{S} \dslash R)\).
\end{remark}
\section{The Relative Frobenius and the Koszul Complex}\label{sec:frob-koszul}
In this section we review the relative Frobenius and the Koszul
complex in the simplicial setting, and highlight their relationship.

\begin{notation}\label[notation]{not:frob-koszul}
  We adopt the notation of \cref{not:cats} and \cref{not:aq} for
  this section. Additionally, if \(A \xto{f} B\) is a ring map, then
  \(B_f\) denotes an identical copy of \(B\) with an \(A\)-algebra
  structure given via \(f\). Note that, since the underlying ring
  is the same, \(\Ring(B_f) = \Ring(B)\).
\end{notation}

\begin{construction}[Relative Frobenius]\label{con:rel-frob}
  Suppose \(k \to R\) is a map of simplicial rings of prime
  characteristic \(p\), and let \(F\) denote the Frobenius
  endomorphism. Commutativity of the diagram
  \[
    \begin{tikzcd}
      k \ar[r, "F"] \ar[d] & k_F \ar[d] \\
      R \ar[r, "F"] & R_F
    \end{tikzcd}
  \]
  yields the map
  \[F_{R|k} \co k_F \lotimes_k R \to R_F\]
  in \(\Ho(\sRing^{k_F})\), known as the %
  \emph{derived relative Frobenius} (cf. \citestacks{0CC9}).
\end{construction}

\begin{construction}\label[construction]{con:Koszul}
  Let \(R\) be a ring and \(\bff = f_1, \ldots, f_n\) be a sequence of
  elements in \(R\). Using the notation of \cref{ex:AQ-kx/x2}, define
  the \emph{simplicial Koszul complex}, \(K(\bff)\), of \(\bff\) to
  be the simplicial polynomial ring
  \[K(\bff) \ceq R[\xi_1, \ldots, \xi_n \mid \partial\xi_i = f_i].\]
  If \((R, \mfm, k)\) is a noetherian local ring, define \(K^R\) to be
  the simplicial Koszul complex on a minimal generating set for
  \(\mfm\). One may directly verify that
  \[\mfm \cdot \pi_*(K^R) = \pi_*(\mfm \cdot K^R) = 0,\]
  so if \(\widehat{R}\) denotes the completion of \(R\) at \(\mfm\), then
  \[ K^R \simeq \widehat{R} \otimes_R K^R \cong K^{\widehat{R}}. \]
  
  If \(R\) is complete, then by the Cohen structure theorem,
  \(R \cong Q/I\) where \(Q = \Lambda \ldsq x_1, \ldots, x_d \rdsq\),
  and \(\Lambda\) is either \(k\) or a discrete valuation ring with
  uniformizer \(p\) a prime number \citestacks{032A}. Additionally,
  \(Q\) can be chosen minimally, in the sense that
  \(\{x_1, \ldots, x_d, p\}\) is a minimal generating set for
  \(\mfm_R\). Thus if \(Q\) is minimal, we may directly verify
  \[k \lotimes_Q R \simeq K^Q \otimes_Q R \simeq K^R.\]
  If \(\phi \co (R,k) \to (S,\ell)\) is a map of noetherian local
  rings, define the map
  \[ K^\phi \co \ell \lotimes_k K^R \to K^S, \]
  in \(\Ho(\sRing(\ell))\), where the map \(k \to K^R\) is given by
  \(k \to k \lotimes_Q \widehat{R} \simeq K^{\widehat{R}} \simeq
  K^R\).
\end{construction}

\begin{remark}\label[remark]{rem:Koszul-is-Koszul}
  Treating \(K(\bff)\) as a simplicial \(R\)-module and applying the
  Dold-Kan theorem (see \cref{rem:AQ-complex}), recovers the usual
  Koszul complex of \(R\). This can be seen by direct verification
  when \(n = 1\) \cite[4.16]{Iyengar07}. Since
  \( K(\bff) = K(f_1) \otimes_R \cdots \otimes_R K(f_n), \) the
  general claim follows by induction. In particular, we note that
  \[
    \begin{cases}
      \pi_0(K(\bff)) \cong R/(\bff) \\
      \pi_i(K(\bff)) = 0 & i > n.
    \end{cases}
  \]
\end{remark}

\begin{remark}\label[remark]{rem:KR-well-def}
  If \(\bff\) and \(\bff'\) are minimal generating sets for the same
  ideal in \(R\), then there is an isomorphism
  \(K(\bff) \toeq K(\bff')\) defined by expressing the elements of
  \(\bff\) as linear combinations of elements of \(\bff'\).
\end{remark}

\begin{remark}\label[remark]{rem:frob-is-koszul}
  Let \((R,k)\) be a local ring with prime characteristic
  \(p\). Let \(F \co R \to R\) denote the Frobenius endomorphism and
  fix a minimal Cohen presentation \(Q \to \hat{R}\). The induced map
  \(K^F\) on the Koszul complex is given by
  \(k_F \lotimes_{Q_F} F_{\hat{R}|Q}\).
  \[
    \begin{tikzcd}[displaystyle]
      k_F \lotimes_k K^R \ar[d,"K^F"'] \ar[r, phantom, sloped, "\simeq"]
      & k_F \lotimes_k K^{\hat{R}}  \ar[d] \ar[r, phantom, sloped, "\simeq"]
      & k_F \lotimes_k k \lotimes_Q \hat{R} \ar[r, phantom, sloped, "\simeq"]
      & k_F \lotimes_{Q_F} Q_F \lotimes_Q \hat{R} \ar[d, "k_F \lotimes_{Q_F} F_{\hat{R}|Q}"]
      \\
      K^{R_F} \ar[r, phantom, sloped, "\simeq"]
      &K^{\hat{R}_F} \ar[rr, phantom, sloped, "\simeq"]
      && \mathclap{k_F \lotimes_{Q_F} \hat{R}_F}
    \end{tikzcd}
  \]
\end{remark}
\section{Ghost Maps}\label{sec:ghost-maps}
As discussed in \cref{sec:aq}, the natural theory of homology in the
setting of commutative rings is \andq\ homology. By analogy to the
definition for ghost maps in the setting of chain complexes \cite{
  Christensen98,
  Beligiannis08,
  Kelly65,
  Letz21,
  LiuPollitz23,
  OppermannStovicek12
}, we define a ring map to be ghost by the vanishing of \andq\ homology.

\begin{notation}\label[notation]{not:ghost-maps}
  We adopt the notation of \cref{not:cats}, \cref{not:aq}, and
  \cref{not:frob-koszul} for this section.
\end{notation}

\begin{definition}\label[definition]{def:ghost}
  A map \(R \to S\) in \(\sRing(k)\) is \emph{ghost} if the induced map
  on \andq\ homology \(\AQ_*(R) \to \AQ_*(S)\) is zero.
\end{definition}

\begin{example}\label[example]{exm:etale-ghost}
  If \(R \to S\) is \'{e}tale, then \(\AQ_*(S \dslash R) = 0\), so any map to
  or from \(S \dslash R\) ghost.
  \citestacks{08R2}
\end{example}

\begin{proposition}\label[proposition]{pro:frob-ghost}
  Let \(k\) and \(\ell\) be simplicial rings of prime characteristic
  and suppose \(k \to R \to \ell\) is an augmented simplicial
  \(k\)-algebra. Then the map
  \[
    \ell_F \lotimes_{k_F} F_{R|k} \co \ell_F \lotimes_k R \to \ell_F \lotimes_{k_F} R_F
  \]
  in \(\Ho(\sRing(\ell_F))\) is ghost.
\end{proposition}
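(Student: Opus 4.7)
The plan is to compute $\AQ_*(\varphi)$, where $\varphi \ceq \ell_F \lotimes_{k_F} F_{R|k}$, directly at the level of a cofibrant model. Everything will reduce to the observation that Frobenius kills $I/I^2$, because $x^p = x \cdot x^{p-1} \in I^2$ for $p \ge 2$.

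I would begin by fixing a cofibrant replacement $A \ontosim R$ in $\sRing^k_\ell$, chosen so that each $A_n$ is a polynomial $k$-algebra on a set $X_n$ of generators lying in the augmentation ideal $\ker(A_n \to \ell_n)$. Base-changing along $k \to \ell_F$ and along $k_F \to \ell_F$ produces cofibrant resolutions of $\ell_F \lotimes_k R$ and $\ell_F \lotimes_{k_F} R_F$ by $\ell_F \otimes_k A$ and $\ell_F \otimes_{k_F} A_F$ respectively, in $\sRing(\ell_F)$. At each simplicial level both of these rings are naturally identified with the polynomial ring $\ell_F[X_n]$ augmented by $X_n \mapsto 0$. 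Unwinding the pushout description of the relative Frobenius from \cref{con:rel-frob}, the map $F_{A|k}$ acts as $a \otimes b \mapsto a \cdot b^p$, so $\varphi$ is represented at each simplicial level by the ring map $\ell_F[X_n] \to \ell_F[X_n]$ that is the identity on $\ell_F$ and sends $x \mapsto x^p$ for every $x \in X_n$.

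Applying the abelianization functor $\Ab(B) = I_B/I_B^2$ from \cref{rem:AQ-complex}, both source and target abelianizations are the free $\ell_F$-module on $X_n$ in each simplicial degree, and $\varphi$ induces the map sending $x$ to the class of $x^p$. Since $p-1 \ge 1$, we have $x^p = x \cdot x^{p-1} \in I^2$, so this class vanishes. Hence $\Ab(\varphi)$ is identically zero as a map of simplicial $\ell_F$-modules, so $\AQ_*(\varphi) = 0$, which is exactly the definition of ghost.

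The main technical obstacle is the bookkeeping around the various Frobenius twists on $k$, $\ell$, $R$, and $A$ — specifically, verifying that $\varphi$ really does assemble into an honest morphism in $\sRing(\ell_F)$. The point is that the augmentation of the source $\ell_F \otimes_k A$ to $\ell_F$ carries a Frobenius twist ($a \otimes b \mapsto a \cdot \epsilon(b)^p$), while the target uses the untwisted rule ($a \otimes b \mapsto a \cdot \epsilon(b)$); these are compatible with $\varphi$ exactly because $\varphi$ raises the $A$-factor to the $p$-th power, and both augmentations restrict to the same ``variables-to-zero'' rule on the polynomial generators $X_n$. Once that compatibility is established, the abelianization calculation above is immediate.
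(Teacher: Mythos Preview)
Your argument is correct and follows essentially the same route as the paper: take a polynomial cofibrant replacement, identify both source and target with $\ell_F[X]$, observe that the map is $x \mapsto x^p$ on generators, and conclude that the induced map on $I/I^2$ vanishes. Your additional paragraph tracking the two augmentations on $\ell_F[X_n]$ and verifying their compatibility with $\varphi$ is a detail the paper leaves implicit, but it is exactly the right check.
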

\begin{proof}
  Choose a cofibrant replacement \(k[X] \ontosim R\) over \(k\), and
  note that the same underlying simplicial ring yields a cofibrant
  replacement \(k_F[X] \ontosim R_F\) over \(k_F\).  We thus compute
  \[
    \ell_F \otimes_k k[X] = \ell_F[X]
    \mand
    \ell_F \otimes_{k_F} k_F[X] = \ell_F[X].
  \]
  The Frobenius on \(k[X]\) naturally lifts the Frobenius on \(R\)
  \[
    \begin{tikzcd}
      k[X] \ar[r, "F"]  \ar[d, two heads, sloped, "\sim"]
      & k_F[X] \ar[d, two heads, sloped, "\sim"] \\
      R \ar[r, "F"'] & R_F,
    \end{tikzcd}
  \]
  so \(\ell_F \lotimes_{k_F} F_{R|k}\) is given by
  \[
    \begin{tikzcd}[row sep = 0.3ex]
      \ell_F[X] \ar[r] & \ell_F[X] \\
      lx \ar[r, maps to] & lx^p.
    \end{tikzcd}
  \]
  If \(I\) and \(I_F\) are their respective augmentation ideals, then
  the induced map \(I \to I_F\) factors through \(I_F^p\), so the map
  \(I/I^2 \to I_F/I_F^2\) is zero. Thus
  \(\ell_F \lotimes_{k_F} F_{R|k}\) is ghost by \cref{rem:AQ-complex}.
\end{proof}

We conclude this section with an illustration of the rigidity
experienced by ghost maps in the case of ordinary rings, and,
following \cref{rem:kunzish-ghost}, contrast this with the weaker
hypothesis that the induced map on the Koszul complex is ghost.

\begin{proposition}\label[proposition]{pro:vanish-conormal}
  Fix a ring \(R\) and a map \(\phi \co S \to T\) in \(\Ring(R)\). Let
  \[I_S = \ker(S \to R) \mand I_T = \ker(T \to R).\]
  If
  \(\phi\) is ghost then \(\phi(I_S) \subset I_T^2\).
\end{proposition}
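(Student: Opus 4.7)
The plan is to reduce everything to identifying $\AQ_0$ of an ordinary (non-simplicial) augmented algebra with its conormal module $I/I^2$, after which the conclusion is a direct reading of the ghost hypothesis in degree zero.

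First I would pick cofibrant replacements $A_S \ontosim S$ and $A_T \ontosim T$ in $\sRing(R)$, and lift $\phi$ to a map $\tilde\phi \co A_S \to A_T$ over $R$ (possible because $A_S$ is cofibrant and $A_T \ontosim T$ is an acyclic fibration after passing through $\phi$). Let $J_S = \ker(A_S \to R)$ and $J_T = \ker(A_T \to R)$. By \cref{rem:AQ-complex}, $\AQ(S) \simeq J_S/J_S^2$ and $\AQ(T) \simeq J_T/J_T^2$ as simplicial $R$-modules, and $\AQ_0(\phi) \co \AQ_0(S) \to \AQ_0(T)$ is induced by $\tilde\phi$.

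Next I would compute $\AQ_0$ explicitly. Since $\pi_0$ is right exact on simplicial abelian groups and $\pi_0(A_S) = S$, one has $\pi_0(J_S) = \ker(S \to R) = I_S$, and then $\pi_0(J_S/J_S^2) = I_S/I_S^2$; similarly for $T$. The restriction of $\tilde\phi$ to the augmentation ideals descends to $J_S/J_S^2 \to J_T/J_T^2$, and on $\pi_0$ this coincides with the map $I_S/I_S^2 \to I_T/I_T^2$ induced by $\phi$ itself, since $\pi_0(\tilde\phi) = \phi$. Thus $\AQ_0(\phi)$ is precisely the canonical map $I_S/I_S^2 \to I_T/I_T^2$ induced by $\phi(I_S) \subset I_T$.

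Finally, the hypothesis that $\phi$ is ghost (\cref{def:ghost}) forces $\AQ_0(\phi) = 0$, i.e.\ $\phi(I_S) \subset I_T^2$, which is the claim. The only nontrivial content is the identification $\AQ_0(S) = I_S/I_S^2$ and that $\AQ_0(\phi)$ is the evident induced map; this is the step requiring care, but it is purely a matter of matching the cofibrant-replacement definition of $\AQ$ with the classical conormal module, and the argument above suffices.
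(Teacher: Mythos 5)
Your proof is correct, and at the level of strategy it matches the paper's: both arguments reduce the proposition to the natural identification \(\AQ_0(S) \cong I_S/I_S^2\) (and likewise for \(T\)), after which the ghost hypothesis in degree zero immediately gives \(\phi(I_S) \subset I_T^2\). The only real difference is how that identification is justified. The paper invokes the suspension \(R \dslash S \simeq R \lotimes_S R\) together with Quillen's computation \(\AQ_1(R \dslash S) \cong I_S/I_S^2\), whereas you compute \(\AQ_0\) directly from a cofibrant replacement \(A_S \ontosim S\) via \cref{rem:AQ-complex} and right-exactness of \(\pi_0\); your route is more self-contained and has the advantage of making explicit why \(\AQ_0(\phi)\) is the evident conormal map induced by \(\phi\), at the cost of a slightly longer verification. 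One small imprecision: right-exactness of \(\pi_0\) by itself only controls cokernels, so it does not directly give \(\pi_0(J_S) = I_S\); you should also use that the augmentation \(A_S \to R\) is split by the unit (equivalently, that \(R\) is a constant simplicial object, so \(\pi_1(R) = 0\) in the long exact sequence of the degreewise short exact sequence \(0 \to J_S \to A_S \to R \to 0\)). With that observation, \(\pi_0(J_S) = I_S\), and since \((J_S)_0\) surjects multiplicatively onto \(I_S\), the image of \(\pi_0(J_S^2)\) is \(I_S^2\), so \(\pi_0(J_S/J_S^2) = I_S/I_S^2\) as you claim. This is easily repaired and does not affect the argument.
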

\begin{proof}
  The suspension of \(S\) in \(\sRing(R)\) is given by
  \(R \lotimes_S R \simeq R \dslash S\). Thus, the computation from
  \cite[3.14]{QuillenHCR} gives us
  \[ \AQ_0(S) \cong \AQ_1(R \dslash S) \cong I_S/I_S^2. \]
  Likewise, we obtain \(\AQ_0(T) = I_T/I_T^2\).  Since \(\phi\) is
  ghost, the induced map
  \[I_S/I_S^2 \to I_T/I_T^2\]
  is zero, so \(\phi(I_S) \subset I_T^2\) as desired.
\end{proof}
If \((R, \mfm, k)\) is a local ring, recall that a local endomorphism
\(\phi \co R \to R\) is contracting if \(\phi^j(\mfm) \subset \mfm^2\)
for \(j \gg 0\) \cite{AvramovIyengarMiller06}.
\begin{corollary}\label[corollary]{cor:contracting}
  Suppose \(R\) is an equicharacteristic local ring with residue field
  \(k\) and let \(\phi\) be is a local endomorphism. If \(\phi\) is
  ghost as a map in \(\Ring(k)\), then \(\phi\) is contracting.
\end{corollary}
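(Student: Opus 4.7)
The plan is to reduce this to a one-line application of \cref{pro:vanish-conormal}. Since \(\phi\) is a map in \(\Ring(k)\), in particular both source and target are supplemented \(k\)-algebras \(R \to k\), and the augmentation ideal is the kernel of \(R \to k\), which is exactly the maximal ideal \(\mfm\). The equicharacteristic hypothesis is what makes this setup available in the first place — it guarantees (possibly after completion, using \cref{rem:completion}) the existence of a coefficient field, i.e.\ a splitting \(k \to R\) of the residue map turning \(R\) into an object of \(\Ring(k)\).

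With this observation, I would apply \cref{pro:vanish-conormal} directly with \(S = T = R\) and the map \(\phi\). Since the augmentation ideal is \(\mfm\) on both sides, the conclusion is \(\phi(\mfm) \subset \mfm^2\). This is in fact strictly stronger than the contracting condition: the definition only requires \(\phi^j(\mfm) \subset \mfm^2\) for some \(j \gg 0\), but here the case \(j = 1\) already holds.

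Of course, once \(\phi(\mfm) \subset \mfm^2\) is established, contractingness for all \(j \ge 1\) is a routine induction: \(\phi^{j+1}(\mfm) = \phi(\phi^j(\mfm)) \subset \phi(\mfm^{2^j}) \subset \phi(\mfm)^{2^j} \subset \mfm^{2^{j+1}} \subset \mfm^2\).

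There is no real obstacle in this proof — all of the substantive work has already been done in \cref{pro:vanish-conormal}, which identifies \(\AQ_0\) with the conormal module \(I/I^2\) and so forces the inclusion into the square of the augmentation ideal. The only mild subtlety worth flagging is the role of the equicharacteristic hypothesis, which is not used in the vanishing argument itself but is needed to make sense of viewing \(R\) (or its completion) as an object of \(\Ring(k)\) and of \(\phi\) as a morphism there.
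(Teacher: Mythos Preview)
Your proposal is correct and matches the paper's approach: the corollary is stated immediately after \cref{pro:vanish-conormal} with no proof, precisely because it is the direct application you describe, identifying the augmentation ideal with \(\mfm\). Your aside about completion is unnecessary here since the hypothesis already places \(\phi\) in \(\Ring(k)\), but it does no harm.
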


We now pivot our discussion to the case where \(K^\phi\) is ghost.

\begin{remark}\label[remark]{rem:kunzish-ghost}
  Suppose \(R\) is a complete local ring and let \(Q \to R\) be a
  minimal Cohen presentation as in \cref{con:Koszul}, where
  \(Q = \Lambda \ldsq x_1, \ldots, x_d \rdsq\). Since \(Q\) is
  regular, \(\AQ(Q \dslash \Lambda) \simeq (\bfx)/(\bfx)^2\). In
  particular, \(\AQ_{\ge 1}(Q \dslash \Lambda) = 0\). Further,
  minimality of \(Q \to R\) implies
  \(\AQ_0(Q \dslash \Lambda) \cong \AQ_0(R \dslash \Lambda)\), and so
  the long exact sequence in \andq\ homology corresponding to the
  homotopy cofiber sequence
  \[Q \dslash \Lambda \to R \dslash \Lambda \to R \dslash Q\]
  shows that \(\AQ_i(R \dslash \Lambda) \cong AQ_i(R \dslash Q)\) for
  \(i \ge 1\). Thus, since \(R \dslash Q \simeq K^R\), the condition
  that \(K^\phi\) is ghost is equivalent to
  \(\AQ_{\ge 1}(\phi \dslash \Lambda) = 0\). In particular, this
  condition holds when \(\phi \dslash \Lambda\) is itself ghost.
\end{remark}

\begin{remark}\label[remark]{rem:conormal-ghost}
  Suppose \(R \in \Ring(k)\) is a local complete intersection ring with
  residue field \(k\). By definition \(\hat{R}\) admits a minimal
  Cohen presentation \(\hat{R} \cong Q/I\) where \(I\) is generated by a
  regular sequence. A direct calculation similar to that in the proof
  of \cref{pro:vanish-conormal} shows that
  \[
    \AQ_i(R) \cong
    \begin{cases}
      \mfm_Q/\mfm_Q^2 & i=0 \\
      I/I^2 & i=1 \\
      0 & i \ge 2
    \end{cases}
  \]
  Thus by \cref{rem:kunzish-ghost}, if \(\phi \co R \to R\) is a local
  endomorphism, then \(K^\phi\) is ghost if and only if
  \(\tilde{\phi}(I) \subset I^2\), where \(\tilde{\phi}\) is a lift of
  \(\phi\) to \(Q\). If \(R\) is mixed characteristic with
  \(\Lambda \subset Q\) as in \cref{con:Koszul}, then a similar
  statement exists for \(\AQ(R \dslash \Lambda)\) and one deduces that
  \(K^\phi\) is ghost if and only if \(\tilde{\phi}(I) \subset pI + I^2\).
\end{remark}

\begin{example} The following are examples of local complete
  intersection rings \(R \in \Ring(k)\) with residue field \(k\) and
  local endomorphisms \(\phi\). In each example, the map \(\phi\) is
  not ghost but the induced map on the Koszul complex \(K^\phi\) is a
  ghost map (see \cref{cor:contracting} and
  \cref{rem:conormal-ghost}).
  \begin{itemize}
      \item \(R = k \ldsq x,y \rdsq/(y^3)\) and \(\phi =
    \begin{cases}
      x \mapsto x \\
      y \mapsto y^2
    \end{cases}
    \)
      \item \(R = k \ldsq x,y \rdsq /(xy)\) and \(\phi =
    \begin{cases}
      x \mapsto x \\
      y \mapsto 0
    \end{cases}
    \)
      \item \(R = k \ldsq x,y,z \rdsq /(xyz)\) and \(\phi =
    \begin{cases}
      x \mapsto x \\
      y \mapsto y^2 \\
      z \mapsto xz^2
    \end{cases}
    \)
  \end{itemize}
\end{example}

\begin{example}\label[example]{ex:non-contracting-ghost}
  Fix \(p\) a prime and let \(R = \bbZ_p[x, y]/(xy)\). Consider the
  endomorphism \(\phi\) defined by \(x \mapsto x\) and
  \(y \mapsto py\). By \cref{rem:conormal-ghost}, \(K^\phi\) is ghost.
\end{example}
\section{A Ghost Lemma for Commutative Rings}\label{sec:ghost-proof}
We now state the main result of this paper, a \textit{ghost lemma} for
commutative rings, analogous to \cite[Theorem 3]{Kelly65} (see also
\cite[Theorem 8.3]{Christensen98}). Recall that a nullhomotopic map is
one which is trivial in the homotopy category
(\cref{def:nullhomotopic}) and a trivial map in \(\sRing(k)\) is one
which factors through \(k\).

\begin{theorem}\label[theorem]{thm:ghost-ring}
  Let \(k\) be a field. Suppose \(S \in \sRing(k)\) is connected
  and \(\pi_i(S) = 0\) for all \(i \ge 2^n\).  If \(f \co R \to S\) is
  a composition of at least \(n\) ghost maps, then \(f\) is
  nullhomotopic.
\end{theorem}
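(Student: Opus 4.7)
The plan is to induct on $n$, with the bound $2^n$ arising from a connectivity-doubling phenomenon analogous to the Hurewicz theorem in simplicial commutative $k$-algebras. For the base case $n = 0$, the hypotheses give $\pi_i(S) = 0$ for all $i \ge 1$, and together with connectedness this forces $S \simeq k$ in $\Ho(\sRing(k))$, so any augmented map $R \to S$ is automatically nullhomotopic.

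For the inductive step, I would construct a Postnikov-type tower for $S$ adapted to \andq\ homology: since $k$ is a field and $\pi_i(S)$ vanishes for $i \ge 2^n$, $S$ admits a cofibrant model built from $k$ by iteratively attaching free polynomial generators of the form in \cref{ex:AQ-kx/x2}, organized so that stage $j$ contributes generators only in degrees within the window $[2^{j-1}, 2^j)$. The key point, and the reason the tower terminates in exactly $n$ stages, is a Hurewicz-type doubling for simplicial commutative $k$-algebras: below degree $2^{j-1}$, the \andq\ information controls the homotopy up through roughly degree $2^j$, so one can always complete stage $j+1$ with cells in the next window. This is also where the $2^n$ in the statement comes from: each ghost map is responsible for climbing exactly one stage of this tower.

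Given such a tower $k = S_0 \to S_1 \to \cdots \to S_n \simeq S$, I would decompose $f = g_n \circ \cdots \circ g_1$ against it and prove by downward induction that each ghost map $g_j$ lets one homotope the partial composite to factor through $S_{j-1}$. The obstruction to such a factorization is an \andq\ cohomology class of the source with coefficients in the cells of the $j$-th layer; the ghost hypothesis on $g_j$, combined with the Jacobi-Zariski sequence of \cref{cons:jacobi-zariski} applied to the cofiber sequence determining the $j$-th layer, forces this obstruction to vanish. After processing all $n$ ghost maps the composite lands in $S_0 = k$ and is therefore nullhomotopic. The main technical obstacle will be to set up the tower with sharp enough degree windows that the Hurewicz-type doubling ensures $n$ stages suffice, and to verify that the obstruction class governing each stage is the one annihilated by the ghost property, rather than some related but distinct class.
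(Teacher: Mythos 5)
Your base case is fine, but the inductive step rests on a construction that does not exist in the generality of the theorem. You propose a cofibrant model of \(S\) built from \(k\) in exactly \(n\) stages, with free polynomial generators confined to the degree windows \([2^{j-1},2^j)\), justified by the vanishing \(\pi_i(S)=0\) for \(i\ge 2^n\). But bounded homotopy groups do not bound \andq\ homology, and they do not bound the degrees in which a cofibrant model needs generators. Concretely, take \(S=K^R\), the simplicial Koszul complex of a local ring \(R\) that is not a complete intersection: then \(\pi_i(S)=0\) for all \(i\) above the embedding dimension, yet \(\AQ_i(S)\neq 0\) in infinitely many degrees, so every cofibrant model of \(S\) over \(k\) requires free generators in arbitrarily high degrees and no finite-stage tower with cells below \(2^n\) can exist. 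Since \(K^R\) is exactly the object the theorem is applied to in \cref{sec:singularities}, this is not a peripheral case. The ``Hurewicz-type doubling'' you invoke runs in the wrong direction: high connectivity (vanishing of \emph{low} homotopy) improves the comparison between \(\pi_*\) and \(\AQ_*\), but vanishing of \emph{high} homotopy gives no control over high-degree \(\AQ\) classes or cells. A secondary concern is the obstruction step: ghostness only says the induced map on \(\AQ_*(-;k)\) vanishes, while the obstructions to descending through your tower would live in \andq\ cohomology of the source with coefficients in the layers, and you would still have to argue that the former kills the latter; but the tower itself is the fatal gap.

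The paper's proof obtains the exponent \(2^n\) by an entirely different mechanism, working on the source side with augmentation ideals rather than building a tower on the target. A single ghost map \(R\to S\) induces the zero map \(I/I^2\to J/J^2\), and since \(k\) is a field this map is nullhomotopic as a map of modules; \cref{lem:mod-to-alg} upgrades the homotopy to one of (nonunital) algebras, and \cref{lem:I-lift-J2} then lifts \(I\to J\), up to homotopy, through \(J^2\). Because these lifts are algebra maps they restrict to powers, so a composite of \(n\) ghost maps factors \(I\to J\) up to homotopy through \(J^{2^n}\). Quillen's connectivity estimate (\cref{lem:homotopy-powers}) gives \(\pi_i(J^{2^n})=0\) for \(i<2^n\), and the hypothesis \(\pi_i(S)=0\) for \(i\ge 2^n\) then makes the inclusion \(J^{2^n}\to J\) nullhomotopic (\cref{lem:null-inc}), so the composite is trivial. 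If you want to repair your argument, the lesson is that the doubling comes from squaring the augmentation ideal at each ghost map, not from any cellular or Postnikov decomposition of \(S\).
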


The remainder of this section is devoted to the proof of the above
theorem.
\begin{notation}\label[notation]{not:ghost-proof}
  In addition to the notation of \cref{not:cats} and \cref{not:aq},
  we adhere to the following notation.
  \begin{itemize}
      \item \(\NRing(k)\) is the category of nonunital commutative
    \(k\)-algebras.
      \item \(\NRR \co \NRing(k) \to \Ring(k)\) is the functor
    \(I \mapsto k \oplus I\), where the multiplicative structure is
    given by \((a+i)(b+j) = ab + (aj + bi + ij)\).
      \item \(\Cyl_\Ring\), \(\Cyl_\NRing\), and \(\Cyl_\Mod\)
    denote the simplicial cylinder functors for \(\sRing(k)\),
    \(\sNRing(k)\), and \(\sMod(k)\) respectively (\cref{rem:cyl-simp}).
      \item For each of the above cylinder functors, we adopt the
    shorthand \(f \Cylsim[\ctC] g\) for \(f \Cylsim[\Cyl_\ctC] g\).
  \end{itemize}
  For every \(R \in \Ring(k)\), there is a splitting of \(k\)-modules
  \(R = k \oplus I\), where \(I\) is the augmentation ideal
  \(\ker(R \to k)\). Consequently, the functor
  \(\NRR \co \NRing(k) \to \Ring(k)\) is an equivalence of categories.
\end{notation}

\begin{remark}\label[remark]{rem:homotopy-connected}
  Fix a ring map \(k \to \ell\). The free functor
  \(F \co \cat{Set} \to \Ring^k_\ell\) takes a set \(X\) to the
  polynomial ring \(k[X]\) equipped with the inclusion \(k \to k[X]\)
  and the augmentation \(k[X] \to \ell\) sending the elements of \(X\)
  to 0. The right adjoint of this functor is the forgetful functor
  \(U \co \Ring^k_\ell \to \cat{Set}\). Contrary to what one might
  expect, we can conclude from the above description of \(F\) that
  \(U(R)\) is the underlying set of the augmentation ideal
  \(I = \ker(R \to \ell)\) rather than the underlying set of \(R\)
  itself. The pointed version of the forgetful functor is defined
  identically, where 0 is identified as the marked point. The homotopy
  groups of \(R\) as defined in \cref{def:homotopy-groups} are thus
  the homotopy groups of \(I\) as a simplicial set. Consequently, for
  \(R\) to be connected as an object in \(\sRing^k_\ell\), we ask that
  \(\pi_0(I) = 0\).
\end{remark}

Let \(R \to S \) be a map in \(\sRing(k)\) and let \(I\) and \(J\) be
their respective augmentation ideals. We sketch an
outline below for the proof of the theorem in two stages.
\begin{center}
  \renewcommand{\arraystretch}{1.5}
  \begin{tabular}{r p{0.8\linewidth}}
    \textbf{Stage 1}  & \(R \to S\) is ghost \\
    \(\implies\) & \(I/I^2 \to J/J^2\) is nullhomotopic as a module homomorphism \\
    \(\implies\) & \(I/I^2 \to J/J^2\) is nullhomotopic as an algebra homomorphism (\ref{lem:mod-to-alg}) \\
    \(\implies\) & \(I \to J\) factors (up to homotopy) through \(J^2\) as an algebra homomorphism (\ref{lem:I-lift-J2})
  \end{tabular}
\end{center}
\begin{center}
  \renewcommand{\arraystretch}{1.5}
  \begin{tabular}{r p{0.8\linewidth}}
    \textbf{Stage 2} & \(R = R_0 \to R_1 \to \cdots \to R_{n} = S\) is a sequence of ghost maps \\
    \(\implies\) & \(I \to J\) factors (up to homotopy) through \(I_0 \to I_1^2 \to \cdots \to I_{n}^{2^n}\) \\
    \(\implies\) & \(R \to S\) factors (up to homotopy) through \(\NRR(J^n)\) \\
    \(\implies\) & \(R \to S\) is nullhomotopic (\ref{lem:homotopy-powers}, \ref{lem:null-inc}).
  \end{tabular}
\end{center}

\begin{lemma}\label[lemma]{lem:homotopy-powers}\cite[Theorem 6.12]{Quillen70}
  Let \(R \in \sRing(k)\) be a degree-wise finitely generated
  polynomial \(k\)-algebra.  If \(I = \ker(R \to k)\), then
  \(\pi_i(I^n) = 0\) for \(i < n\).\qed
\end{lemma}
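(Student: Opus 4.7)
The plan is to write $R$ as the simplicial symmetric algebra on its module of indecomposables and reduce to a connectivity statement for symmetric powers. Since $R$ is degreewise polynomial over $k$ and carries an augmentation, the canonical map $\Sym_k(I/I^2) \to R$ is an isomorphism of simplicial $k$-algebras: in each simplicial degree this is the standard fact that an augmented polynomial $k$-algebra is recovered as the symmetric algebra on its cotangent module, and the face and degeneracy maps of $R$ descend to give $V := I/I^2$ the structure of a simplicial $k$-module compatible with the isomorphism. Moreover, since $R$ is degreewise finitely generated polynomial and augmented, $V$ is connected, i.e.\ $\pi_0(V) = 0$.

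Under this identification we obtain a splitting of simplicial $k$-modules
\[
  I^n \;\cong\; \bigoplus_{m \ge n}\,\Sym_k^m V,
\]
because in each simplicial degree the powers of the augmentation ideal of $\Sym_k(V_\bullet)$ split into symmetric powers of $V_\bullet$. Since homotopy commutes with direct sums of simplicial $k$-modules,
\[
  \pi_i(I^n) \;\cong\; \bigoplus_{m \ge n}\pi_i(\Sym_k^m V).
\]
Thus it is enough to prove the following connectivity statement: if $V$ is a simplicial $k$-module with $\pi_0(V) = 0$, then $\pi_i(\Sym_k^m V) = 0$ for $i < m$; granting this, every summand on the right vanishes in the range $i < n \le m$.

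The connectivity statement is proved by induction on $m$. The base case $m = 1$ is the hypothesis that $V$ is connected. For the inductive step, one uses the short exact sequence of simplicial $k$-modules
\[
  0 \longrightarrow K \longrightarrow V \otimes_k \Sym_k^{m-1} V \longrightarrow \Sym_k^m V \longrightarrow 0
\]
coming from the multiplication map, together with the Künneth formula over the field $k$, which gives $\pi_j(V \otimes_k \Sym_k^{m-1} V) = \bigoplus_{a+b=j}\pi_a(V) \otimes \pi_b(\Sym_k^{m-1} V)$. Since $\pi_0(V) = 0$ and $\pi_{<m-1}(\Sym_k^{m-1} V) = 0$ by induction, the tensor product is $(m-1)$-connected in the sense that its homotopy vanishes below degree $m$. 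The long exact sequence in homotopy then propagates this vanishing to $\Sym_k^m V$, provided the kernel $K$ has comparable connectivity.

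The main obstacle is controlling $K$ in positive characteristic, where $\Sym_k^m V$ is not simply a direct summand of $V^{\otimes m}$ and the $\Sigma_m$-action fails to be semisimple. The kernel $K$ is built from lower tensor powers by the Koszul-style relations that present $\Sym_k^m V$ as a quotient of $V \otimes_k \Sym_k^{m-1} V$, and a careful analysis (via a further filtration by smaller symmetric powers, or equivalently the bar-type spectral sequence associated with the grading on $\Sym_k(V)$) shows that $K$ itself has the needed vanishing range. This is the technical heart of Quillen's Theorem 6.12, and once it is in place the induction closes and yields the claimed bound $\pi_i(I^n) = 0$ for $i < n$.
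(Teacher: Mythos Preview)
The paper supplies no proof here; it records the statement with a citation to Quillen and a \qed. Against Quillen's actual argument, your proposal has a genuine gap: the claimed isomorphism \(\mathrm{Sym}_k(I/I^2) \to R\) of \emph{simplicial} \(k\)-algebras does not exist in general. Degreewise one has \(R_n \cong \mathrm{Sym}_k(I_n/I_n^2)\), but the face maps of \(R\) need not carry polynomial generators to linear combinations of generators, so they are not obtained by applying \(\mathrm{Sym}_k\) to the induced maps on \(I/I^2\). The paper's own \cref{ex:AQ-kx/x2} already illustrates this: in \(A = k[x,y \mid \partial y = x^2]\) the face map \(d_0\) sends the generator \(y\) to \(x^2\), and no change of polynomial variables linearizes \(d_0\) and \(d_1\) simultaneously. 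Hence your direct-sum decomposition \(I^n \cong \bigoplus_{m \ge n}\mathrm{Sym}_k^m V\) fails as a decomposition of simplicial modules. What does survive is the associated-graded statement \(I^n/I^{n+1} \cong \mathrm{Sym}_k^n(I/I^2)\), and Quillen's route runs through this filtration together with a limit argument; the shortcut you take is not available.

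Two smaller points. The claim that \(V\) is connected does not follow from the stated hypotheses (take \(R = k[x]\) as a constant simplicial algebra); the lemma as printed appears to omit this hypothesis, though it is in force when the lemma is invoked in the proof of \cref{thm:ghost-ring}, so you should assume it rather than assert it. And your inductive step for the connectivity of \(\mathrm{Sym}_k^m V\) ends by deferring control of the kernel \(K\) to ``the technical heart of Quillen's Theorem~6.12'', which is a citation rather than an argument; the proposal is therefore not self-contained precisely at the point where the real work lies.
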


\begin{lemma}\label[lemma]{lem:null-inc}\cite[Sec. 2]{BILMP2023}
  Let \(J\) in \(\sNRing(k)\), and \(I\) a subalgebra of \(J\).
  Suppose \(\pi_i(I) = 0\) for \(i < n\) and \(\pi_i(J) = 0\) for
  \(i \ge n\). Then the inclusion \(I \to J\) is nullhomotopic as a
  map of algebras.
\end{lemma}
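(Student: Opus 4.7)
The plan is to construct a cofibrant cellular replacement of \(I\) with cells concentrated in simplicial degrees at least \(n\), and then build a nullhomotopy cell by cell, using that \(\pi_i(J) = 0\) for \(i \ge n\) to kill the obstructions at each stage.

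First, I would produce a cofibrant replacement \(\widetilde{I} \ontosim I\) in \(\sNRing(k)\) realized as a transfinite composition of pushouts of generating cofibrations of the form \(F(\partial \Delta^m_+) \into F(\Delta^m_+)\) with \(m \ge n\), where \(F \co \sSet_* \to \sNRing(k)\) denotes the left adjoint of the pointed-set forgetful functor of \cref{rem:homotopy-connected}. A standard small-object-style construction produces such a replacement by attaching cells only where needed to correct the remaining homotopy; because \(\pi_i(I) = 0\) for \(i < n\), no cells of dimension less than \(n\) are required, and we may take the initial stage \(\widetilde{I}^{(n-1)}\) to be the zero algebra.

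Next, I would build a \(\Cyl_\NRing\)-homotopy \(H \co \Cyl_\NRing(\widetilde{I}) \to J\) between the composite \(\widetilde{I} \ontosim I \into J\) and the trivial map by induction along the cellular filtration \(\widetilde{I}^{(n-1)} \subseteq \widetilde{I}^{(n)} \subseteq \widetilde{I}^{(n+1)} \subseteq \cdots\). The base case is automatic. For the inductive step, extending the nullhomotopy across a newly attached cell of dimension \(m \ge n\) amounts, via the free-forgetful adjunction and the standard identification of the cylinder on a free cell with a free cell on a cylinder, to solving an extension problem in \(\sSet_*\) whose obstruction is a class in \(\pi_m(J)\); this vanishes by hypothesis, so the extension exists. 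Assembling these extensions along the filtration yields \(H\), and combining with \cref{rem:cyl-ho} applied to the cofibrant replacement \(\widetilde{I} \ontosim I\) shows that \(I \into J\) is trivial in \(\Ho(\sNRing(k))\), i.e., nullhomotopic.

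The main obstacle is handling the free commutative algebra cells carefully. Unlike the additive setting of simplicial modules where such connectivity arguments are essentially formal, attaching maps here must respect the multiplicative structure, so one must verify that the obstruction to extending a partial nullhomotopy across \(F(\partial \Delta^m_+) \into F(\Delta^m_+)\) is genuinely computed by \(\pi_m(J)\) via the pointed-set forgetful functor rather than by some twisted or higher-order invariant. This requires a careful use of the free-forgetful adjunction together with the simplicial enrichment of \(\sNRing(k)\) provided by \cref{rem:cyl-simp}.
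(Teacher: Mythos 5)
Your argument is correct in outline, but it takes a genuinely different route from the paper. You modify the source: you replace \(I\) by a cellular cofibrant model built from free cells \(F(\partial\Delta^m_+) \into F(\Delta^m_+)\) with \(m \ge n\) (using \(\pi_i(I)=0\) for \(i<n\)) and extend a nullhomotopy cell by cell, the obstruction over an \(m\)-cell lying in \(\pi_m(J)=0\); since the cells are free and the free--forgetful adjunction is compatible with the simplicial structure of \cref{rem:sC-simp-model-cat}, the extension problem really does reduce to one in \(\sSet_*\) with obstruction in \(\pi_m(J)\), so the delicate point you flag does go through. The paper instead modifies the target: it forms \(J/I\), uses the long exact sequence of \(I \to J \onto J/I\) to see \(\pi_i(J/I)\cong\pi_i(J)\) for \(i\le n\), kills cycles of \(J/I\) in degrees above \(n\) to obtain \(J/I \into K\) with \(J \to K\) a weak equivalence, and notes that \(I \to J \to K\) factors through zero, so \(I\to J\) is trivial in \(\Ho(\sNRing(k))\). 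The paper's route is shorter and needs only the killing-cycles construction \cite[4.10]{Iyengar07}; yours produces an explicit \(\Cyl_\NRing\)-homotopy on a cofibrant replacement (and in fact applies to an arbitrary map \(I\to J\), not only an inclusion), but it leans on two points you should justify rather than wave at: the existence of a cellular model with cells only in dimensions \(\ge n\) is not what the small object argument by itself provides --- it is the connectivity-based approximation argument, attaching cells in degree \(n\) to realize \(\pi_n\) and higher cells to correct kernels and higher homotopy, again in the spirit of \cite[4.10]{Iyengar07} --- and the resulting map \(\widetilde{I}\to I\) is then just a weak equivalence from a cofibrant object rather than an acyclic fibration, which is all you need to conclude as in \cref{rem:cyl-ho}.
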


\begin{proof}
  Let \(J/I \into K\) be the free extension obtained by killing cycles
  in degrees \(i > n \) \cite[4.10]{Iyengar07}.  The homotopy fiber
  sequence \(I \to J \onto J/I\) produces a long exact sequence of
  homotopy groups
  \[
    \cdots \to \pi_i(I) \to \pi_i(J) \to \pi_i(J/I) \to \pi_{i-1}(I) \to \cdots.
  \]
  For \(i < n\), there are isomorphisms \(\pi_i(J/I) = \pi_i(J)\) due
  to the hypothesis \(\pi_i(I) = \pi_{i-1}(I) = 0\). Additionally,
  when \(i = n\), we see \(\pi_i(J) = 0\) and \(\pi_{i-1}(I) = 0\), so
  \(\pi_i(J/I) = 0\).  We conclude that
  \(\pi_i(J) \cong \pi_i(J/I) \cong \pi_i(K)\) for all \(i \le
  n\). Since \(\pi_i(J) = 0 = \pi_i(K)\) for \(i > n\), the composite
  \(J \to K\) is a quasi-isomorphism. The proof follows from
  commutativity of the following diagram.
  \[
    \begin{tikzcd}[anchor=south, column sep={4.5em, between origins}]
      I \ar[r] \ar[rr, bend left, anchor=center, "0" ] 
      & J \ar[r] \ar[rr, bend right, sloped, anchor=center, "\sim"']
      & J/I \ar[r]
      & K
    \end{tikzcd}\qedhere
  \]  
\end{proof}

\begin{lemma}\label[lemma]{lem:mod-to-alg}
  Let \(\phi \co I \to J\) be a map in \(\sNRing(k)\) and let
  \(\bar{\phi}\) denote the induced map on the quotients
  \(I/I^2 \to J/J^2\). Then \(\bar{\phi} \Cylsim[\Mod] 0\)
  implies \(\bar{\phi} \Cylsim[\NRing] 0.\)
\end{lemma}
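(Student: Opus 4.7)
The approach is to exploit the fact that both $I/I^2$ and $J/J^2$ have zero multiplication as non-unital $k$-algebras, since products of augmentation-ideal classes land in $I^2$ (resp.\ $J^2$), which is killed in the quotient. Consequently every $k$-module map between them is automatically a map of non-unital $k$-algebras, and the only gap between the two notions of homotopy is the difference between the cylinder functors. The plan is to construct, for every $X \in \sNRing(k)$ with $X \cdot X = 0$, a natural surjective non-unital algebra map
\[
  \pi_X \co \Cyl_\NRing(X) \to \Cyl_\Mod(X)
\]
(where the target is given zero multiplication) that intertwines the end-inclusions $i_0, i_1$ from $X \sqcup X$. Granted such a $\pi$, the composite $H \circ \pi_{I/I^2}$ is a non-unital algebra map $\Cyl_\NRing(I/I^2) \to J/J^2$ witnessing $\bar\phi \Cylsim[\NRing] 0$, since $H$ is already an algebra map between zero-multiplication algebras.

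To build $\pi_X$, recall that both cylinders are copowers $\Delta^1 \odot X$ taken in different ambient categories, hence computed by coproducts of copies of $X$ indexed by the simplices of $\Delta^1$. Using the equivalence $\NRR \co \NRing(k) \simeq \Ring^k_k$ and the fact that coproducts of augmented algebras over $k$ are tensor products, a direct unfolding gives, in simplicial degree $n$,
\[
  \Cyl_\NRing(X)_n \;=\; \bigoplus_{\emptyset \ne T \subseteq \Delta^1_n} X_n^{\otimes_k T},
  \qquad
  \Cyl_\Mod(X)_n \;=\; \bigoplus_{s \in \Delta^1_n} X_n,
\]
with the latter being the $|T| = 1$ part of the former. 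When $X \cdot X = 0$, the product of two elements supported on index sets $T$ and $T'$ is nonzero only if $T \cap T' = \emptyset$ (otherwise the product would use $X$'s own multiplication in a shared factor), in which case the result lies in the summand indexed by $T \cup T'$ and the tensor-weight strictly increases. Hence the summands with $|T| \ge 2$ form an ideal, and the quotient is precisely $\Cyl_\Mod(X)$ equipped with zero multiplication; $\pi_X$ is this quotient map. Conceptually, $\pi_X$ is simply the abelianization: since $\Ab \co \sNRing(k) \to \sMod(k)$ is left adjoint to the zero-multiplication embedding, it commutes with copowers, giving $\Ab(\Cyl_\NRing(X)) \cong \Cyl_\Mod(\Ab(X)) = \Cyl_\Mod(X)$ when $X^2 = 0$.

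The main obstacle is essentially the bookkeeping to establish the coproduct-expansion formula in $\sNRing(k)$ and to verify that the filtration by $|T|$ is multiplicative under the hypothesis $X^2 = 0$. Compatibility of $\pi_X$ with the end-inclusions $i_0, i_1$ is then automatic from naturality, since in both cylinders they are the structural coproduct inclusions and $\pi_X$ is a natural transformation between copowers of the identity functor.
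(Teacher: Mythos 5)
Your argument is correct, and in fact it produces the very same homotopy as the paper's proof, but it is organized around a different key observation, so a comparison is worthwhile. The paper takes the module homotopy \(h \co \Cyl_\Mod(I/I^2) \to J/J^2\), defines the ring-cylinder homotopy \(H\) on \(\Cyl_\Ring(\NRR(I/I^2))\) component-wise by \(H_{n,i} = \NRR h_{n,i}\) (legitimate because \(I/I^2\) and \(J/J^2\) have trivial multiplication), and then checks compatibility with the simplicial structure by an explicit face-map computation: when \(d_i\) merges two tensor factors, the cross terms are controlled because squares vanish. You instead interpose a natural surjection \(\pi_X \co \Cyl_\NRing(X) \to \Cyl_\Mod(X)\) for \(X\) with \(X \cdot X = 0\) and compose with \(h\); unwinding the tensor decomposition, \(\pi_X\) kills exactly the summands that the paper's \(H\) sends into \((J/J^2)^2 = 0\), so the two witnesses coincide. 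What your route buys is that the only delicate point of the paper's proof, simplicial compatibility, can be made formal: \(\Ab\) is a levelwise left adjoint, hence preserves the coproducts defining \(\Delta^1 \odot -\), so \(\Ab(\Cyl_\NRing X) \cong \Cyl_\Mod(\Ab X) = \Cyl_\Mod(X)\) when \(X \cdot X = 0\), and \(\pi_X\) is just the (automatically simplicial, automatically natural) unit of the adjunction, with compatibility of the end inclusions obtained by applying the natural identification to the vertex inclusions \(* \to \Delta^1\) and using \(\Ab(X) = X\). Two cautions. First, if you argue via the explicit decomposition \(\bigoplus_{T \neq \emptyset} X_n^{\otimes T}\) and the claim that the \(|T| \ge 2\) part is an ideal, then the ``bookkeeping'' you defer must include stability of that part under the face maps which merge tensor factors -- the merged factor is a product of two elements of \(X_{n-1}\) and vanishes by \(X \cdot X = 0\) -- and multiplicativity of the \(|T|\)-filtration alone does not address this; that check is precisely the displayed computation in the paper, so prefer the adjunction formulation if you want to avoid it. Second, state explicitly that \(h\), and the module end inclusions, are algebra maps once all objects are given zero multiplication, so that \(h \circ \pi_{I/I^2}\) restricts to \(\bar\phi\) and \(0\) on the two ends as maps in \(\sNRing(k)\); with those points recorded, your proof is complete.
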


\begin{proof}
  Assume \(\bar{\phi} \Cylsim[\Mod] 0\). Since
  \(\NRR \co \sNRing(k) \to \sRing(k)\) is an equivalence of categories,
  it is sufficient to show \(\NRR\bar{\phi} \Cylsim[\Ring] 0.\)
  
  Let \(h \co \Cyl_\Mod (I/I^2) \to J/J^2\) be a
  \(\Cyl_\Mod\)-homotopy witnessing
  \(\bar{\phi} \Cylsim[\Mod] 0\). According to
  \cref{rem:sC-simp-model-cat}, in simplicial degree \(n\), we have
  \[ h_n \co \bigoplus_{i=0}^{n+1} (I_n/I_n^2) \to J_n/J_n^2. \] %
  Let \(h_{n,i}\) denote the \(i\)-th component map. Since both
  \(I/I^2\) and \(J/J^2\) have trivial multiplication, each component
  map is an algebra homomorphism. Define
  \[
    H_n \co \bigotimes_{i=0}^{n+1} \NRR(I_n/I_n^2) \to \NRR(J_n/J_n^2)
  \]
  component-wise by \(H_{n,i} = \NRR h_{n,i}\). We claim that
  \(H \co \Cyl_\Ring(\NRR(I/I^2)) \to \NRR(J/J^2)\) is compatible with the
  simplicial structure maps. It is clear that \(H\) commutes with the
  degeneracy maps. The face map
  \[
    d_i \co \bigotimes_{i=0}^{n+1} \NRR(I_n/I_n^2)
    \to \bigotimes_{i=0}^n \NRR(I_{n-1}/I_{n-1}^2)
  \]
  is defined by multiplying the \(i\)-th and \(i+1\)-st term and
  applying \(d_i\) to each component
  \[
    \begin{tikzcd}[column sep = 0em, row sep = 1ex]
      f_0 \ar[maps to, dr] & \otimes & \dots & \otimes
      & f_i \ar[maps to, dr] & \otimes & f_{i+1} \ar[maps to, dl]
      & \otimes & \dots & \otimes & f_{n+1} \ar[maps to, dl]
      \\
      & d_if_0  & \otimes & \dots & \otimes
      & d_if_i \cdot d_if_{i+1}
      & \otimes & \dots & \otimes & d_if_{n+1}
    \end{tikzcd}
  \]
  In order to show that \(d_iH_n = H_{n-1}d_i\), it is sufficient to
  show that
  \[
    H_{n-1,i} (d_if_i \cdot d_if_{i+1})
    = d_iH_{n,i}(f_i) \cdot d_iH_{n,i+1}(f_{i+1}).
  \]
  Let \(f_i = a + x\) and \(f_{i+1} = b + y\) where \(a,b \in k\) and
  \(x,y \in I_n/I_n^2\). Observe
  \begin{align*}
    H_{n-1,i} (d_if_i \cdot d_if_{i+1})
    &=H_{n-1,i} (ab + ad_iy + bd_ix) \\
    &= ab + a h_{n-1,i}(d_iy) + bh_{n-1,i}(d_ix)\\
    &= ab + a d_ih_{n,i+1}(y) + b d_ih_{n,i}(x) \\
    &= d_iH_{n,i}(f_i) \cdot d_iH_{n,i+1}(f_{i+1}). \qedhere
  \end{align*}
\end{proof}

\begin{lemma}\label[lemma]{lem:I-lift-J2}
  Let \(\phi \co I \to J\) be a map in \(\sNRing(k)\), and let
  \(\bar\phi\) denote the induced map on the quotients
  \(I/I^2 \to J/J^2\). Suppose \(I\) is cofibrant. If
  \(\bar\phi \Cylsim[\NRing] 0\), then there exists a lift
  \(\tilde{\phi} \co I \to J^2\) in \(\sNRing(k)\) so that
  \[
    \begin{tikzcd}
      & J^2 \ar[d] \\
      I \ar[r, "\phi"'{name=A}]
      \ar[ur, dashed, "\tilde{\phi}"{name=B}]
      \ar[
        from=B,
        to=2-2,        
        phantom,
        "\simeq"{rotate=90}
      ]
      & J
    \end{tikzcd}
  \]
  commutes up to homotopy.
\end{lemma}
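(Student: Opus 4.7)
The plan is to construct $\tilde\phi$ by lifting a nullhomotopy of $\bar\phi$ back along the quotient $\pi_J : J \to J/J^2$. Writing $\pi_I : I \to I/I^2$ for the analogous quotient, the identity $\pi_J \circ \phi = \bar\phi \circ \pi_I$ shows that a $\Cyl_\NRing$-nullhomotopy of $\bar\phi$ pulls back along $\pi_I$ to a $\Cyl_\NRing$-nullhomotopy of $\pi_J \circ \phi$; extending $\phi$ along this nullhomotopy will then produce a new map $I \to J$ landing in $\ker(\pi_J) = J^2$, and this will be the desired $\tilde\phi$.

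Concretely, the hypothesis supplies $H : \Cyl_\NRing(I/I^2) \to J/J^2$ with $H \circ i_0 = \bar\phi$ and $H \circ i_1 = 0$, and I would precompose with $\Cyl_\NRing(\pi_I)$ to obtain $H' : \Cyl_\NRing(I) \to J/J^2$ satisfying $H' \circ i_0 = \pi_J \circ \phi$ and $H' \circ i_1 = 0$. I then solve the lifting problem whose upper edge is $\phi : I \to J$, lower edge is $H' : \Cyl_\NRing(I) \to J/J^2$, left vertical is $i_0$, and right vertical is $\pi_J$. Here $i_0$ is an acyclic cofibration by \cref{rem:cyl-simp} (using cofibrancy of $I$), and $\pi_J$ is a fibration in $\sNRing(k)$ because it is a degreewise surjection, so a lift $\tilde H : \Cyl_\NRing(I) \to J$ exists. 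The composite $\tilde H \circ i_1 : I \to J$ then satisfies $\pi_J \circ (\tilde H \circ i_1) = H' \circ i_1 = 0$, so it factors uniquely as $I \xrightarrow{\tilde\phi} J^2 \hookrightarrow J$, which defines $\tilde\phi$. The map $\tilde H$ itself furnishes a $\Cyl_\NRing$-homotopy from $\phi = \tilde H \circ i_0$ to the composite $I \to J^2 \hookrightarrow J$, and \cref{rem:cyl-ho} upgrades this to an equality in $\Ho(\sNRing(k))$, yielding the desired commutativity of the triangle.

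The one point meriting care is the claim that $\pi_J$ is a fibration in $\sNRing(k)$: this relies on the standard fact that the model structure transferred from $\sSet$ along the free–forgetful adjunction has fibrations equal to the degreewise surjections, which in turn uses Moore's theorem that a surjection of simplicial abelian groups is a Kan fibration. Once this is granted, the remainder is a direct application of the model-category lifting axiom, so I foresee no serious obstacle.
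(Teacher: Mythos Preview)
Your proof is correct and is essentially the same as the paper's: both pull the nullhomotopy $H$ of $\bar\phi$ back along $\Cyl_\NRing(\pi_I)$, solve the lifting problem against the acyclic cofibration $i_0 \co I \into \Cyl_\NRing(I)$ and the surjection $J \twoheadrightarrow J/J^2$, and then observe that the $i_1$-end of the resulting homotopy lands in $J^2$. You are in fact slightly more explicit than the paper about why $\pi_J$ is a fibration (via Moore's theorem), a point the paper leaves implicit in the diagram.
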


\begin{proof}
  Let \(H \co \Cyl_\NRing(I/I^2) \to J/J^2\) be a homotopy witnessing
  \(\bar\phi \Cylsim[\NRing] 0\). Since \(I\) is cofibrant, the
  inclusion
  \[i_0 \ceq I \cong I \sqcup 0 \to I \sqcup I \to \Cyl_\NRing(I)\]
  is an acyclic cofibration, and hence the composite
  \[\Cyl_\NRing(I) \to \Cyl_\NRing(I/I^2) \to J/J^2\] %
  lifts to a \(\Cyl_\NRing\)-homotopy \(\widetilde{H}\) as in the following
  diagram:
  \[
    \begin{tikzcd}
      I \ar[r, "\phi"] \ar[d, hook, shift right, "\sim" sloped, "i_0"'] & J \ar[d, two heads] \\
      \Cyl_\NRing(I) \ar[r] \ar[ur, dashed, "\widetilde{H}"] & J/J^2.
    \end{tikzcd}
  \]
  However, by naturality of the component maps, the composite of
  \(i_1\) with \(\Cyl_\NRing(I) \to J/J^2\) factors through zero,
  \[
    \begin{tikzcd}[anchor=south]
      & & J^2 \ar[d] \\
      I \ar[r, "i_1"] \ar[d] \ar[urr, dashed, bend left=20, "\tilde{\phi}"]
      & \Cyl_\NRing(I) \ar[r, "\widetilde{H}"] \ar[d] & J \ar[d, two heads] \\
      I/I^2 \ar[r, "i_1"] \ar[rr, bend right=20, "0"'] & \Cyl_\NRing(I/I^2) \ar[r, "H"] & J/J^2,
    \end{tikzcd}    
  \]
  inducing the map \(\tilde{\phi} \co I \to J^2\).
\end{proof}

We are now ready to prove the main theorem.

\begin{reptheorem}{theorem}{thm:ghost-ring}\label{rep:thm:ghost-ring}
  Let \(k\) be a field. Suppose \(S \in \sRing(k)\) is connected
  and \(\pi_i(S) = 0\) for all \(i \ge 2^n\).  If
  \(R = R_0 \to \cdots \to R_n = S\) is a sequence of
  ghost maps, then the composite \(R \to S\) is nullhomotopic.
\end{reptheorem}

\begin{proof}
  Let \(I\) denote the augmentation ideal of \(R\), \(J\) denote the
  augmentation ideal of \(S\), and \(I_i\) denote the augmentation
  ideal of \(R_i\), so that \(I = I_0\) and \(J = I_n\). By
  hypothesis, each map
  \(\pi_*(I_i/I_i^2) \to \pi_*(I_{i+1}/I_{i+1}^2)\) is
  zero. Furthermore, since \(k\) is a field,
  \((I_i/I_i^2 \to I_{i+1}/I_{i+1}^2) \Cylsim[\Mod] 0\) for each
  \(i\).  By \cref{lem:mod-to-alg} and \cref{lem:I-lift-J2}, there
  exists a \(\sNRing(k)\) map \(I_i \to I_{i+1}^2\) lifting
  \(I_i \to I_{i+1}\) up to homotopy. As these maps are algebra
  homomorphisms, they are compatible with the restrictions
  \[
    \begin{tikzcd}
      I_i^i  \ar[r] \ar[d] & \cdots \ar[r] & I_i^2  \ar[r] \ar[d] & I_i \ar[d] \\
      I_{i+1}^{2^i} \ar[r]   & \cdots \ar[r] & I_{i+1}^4 \ar[r]      & I_{i+1}^2.
    \end{tikzcd}
  \]
  Composing these restrictions
  \(I = I_0 \to \cdots \to I_n^{2^n} = J^{2^n}\) produces a map
  factoring \(I \to J\) up to homotopy. By hypothesis, \(J\) is
  connected, so by \cref{lem:homotopy-powers}, \(\pi_i(J^{2^n}) = 0\)
  for all \(i < 2^n\). Thus \cref{lem:null-inc} implies
  \(J^{2^n} \to J\) is nullhomotopic. Applying the equivalence of
  categories \(\NRR \co \sNRing(k) \to \sRing(k)\),
    \[
    \begin{tikzcd}[anchor=south]
      ~ &
      \NRR(J^{2^n}) \ar[r] \ar[rrd, ""{name=B,above}]  &
      k \ar[rd]  &
      ~ \\
      R \ar[ur] \ar[rrr, ""{name=A, above}] &
      ~ &
      ~ &
      S
      \ar[from=2-2, to=1-2, phantom, "\simeq" marking]
      \ar[from=B, to=1-3, phantom, "\simeq"' marking]
    \end{tikzcd}
  \]
  concludes the proof.
\end{proof}

\begin{remark}\label[remark]{rem:ghost-nonfield}
  Let \(f_i \co R_i \to R_{i+1}\) as in \cref{thm:ghost-ring}. The
  assumption that \(k\) is a field is only necessary to conclude that
  \(\AQ(f_i) \Cylsim[\Mod] 0\). Indeed, \cref{thm:ghost-ring} holds
  for any degree-wise noetherian \(k \in \sRing\) (required by
  \cref{lem:homotopy-powers}) if one replaces the assumption that
  each \(f_i\) is ghost with the stronger assumption that each
  \(\AQ(f_i)\) is nullhomotopic in \(\sMod(k)\).
\end{remark}
\section{Applications to Singularities}\label{sec:singularities}
We now use \cref{thm:ghost-ring} to demonstrate how
ghost maps detect singularities.

\begin{notation}\label[notation]{not:singularities}
  In this section, we adhere to the notational conventions of
  \cref{not:cats}, \cref{not:aq} and \cref{not:frob-koszul}. We begin by
  reviewing the definition of CI-dimension.
\end{notation}

\begin{definition}\label[definition]{def:ci-dim}\cite[1.2]{AvramovGasharovPeeva97}
  Let \(R\) be a local ring. A \emph{quasi-deformation} of \(R\) is a
  diagram \(R \to R' \from Q\), where \(R \to R'\) is a flat local
  extension and \(R' \from Q\) is a surjective local map whose kernel
  is generated by a regular sequence. If \(M\) is an \(R\)-module,
  then the \emph{complete intersection dimension}, abbreviated
  \emph{CI-dimension}, is defined by
  \[
    \cid_R(M)
    \ceq \inf\{
    \pd_Q (R' \otimes_R M)
    - \pd_Q R' \mid R \to R' \from Q \text{ is a quasi-deformation}\}.
  \]
  See \cite{Sather-Wagstaff08} for a discussion of the case where
  \(M\) is not finitely generated.
\end{definition}

\begin{lemma}\label[lemma]{lem:cid-k}\cite[1.3]{AvramovGasharovPeeva97}
  Let \((R,k)\) be a local ring. Then \(R\) is a complete
  intersection if and only if \(\cid_R(k) < \infty\).
\end{lemma}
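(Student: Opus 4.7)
The plan is to treat the two implications separately. The backward direction will produce a concrete quasi-deformation from a Cohen presentation; the forward direction will reduce to Gulliksen's theorem characterizing complete intersections by polynomial growth of the Betti numbers of the residue field.

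For $(\Leftarrow)$, I would first pass to the completion: $R$ being a complete intersection implies the same for $\hat{R}$, so by the Cohen structure theorem (as in \cref{con:Koszul}) we may write $\hat{R} = Q/(\bff)$ with $Q$ regular (or a power series ring over a DVR) and $\bff$ a regular sequence in $Q$. The diagram $R \to \hat{R} \from Q$ is then a quasi-deformation whose closed fiber is $\hat{R} \otimes_R k \cong k$. Since $Q$ is regular, $\pd_Q(k) = \dim Q$ is finite, and $\pd_Q(\hat{R})$ equals the length of $\bff$; hence $\cid_R(k) \le \pd_Q(k) - \pd_Q(\hat{R}) < \infty$.

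For $(\Rightarrow)$, suppose a quasi-deformation $R \to R' \from Q$ witnesses $\cid_R(k) < \infty$, meaning $\pd_Q(R' \otimes_R k) < \infty$. The plan is to bound the Betti numbers $\beta_n^R(k) = \dim_k \Tor^R_n(k,k)$ polynomially in $n$ and then invoke Gulliksen's theorem. First, flat base change along $R \to R'$ identifies $\beta_n^R(k)$ with $\beta_n^{R'}(R' \otimes_R k)$. Second, since $\ker(Q \to R')$ is generated by a regular sequence of some length $c$, the standard change-of-rings estimate shows that for any $R'$-module $N$ with $\pd_Q(N) < \infty$, the Betti numbers $\beta_n^{R'}(N)$ are bounded by a polynomial in $n$ of degree at most $c-1$. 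Applying this to $N = R' \otimes_R k$ gives polynomial growth of $\beta_n^R(k)$, and Gulliksen's theorem then forces $R$ to be a complete intersection.

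The main obstacle will be the change-of-rings estimate used in the second step. It is essentially the content of \cite[Prop.~1.4]{AvramovGasharovPeeva97} and rests on the graded action of the Eisenbud--Gulliksen cohomology operators (induced by the defining regular sequence) on $\Ext^*_{R'}(N, k)$, which organizes the $R'$-syzygies of $N$ as a finitely generated module over a polynomial ring in $c$ variables. Granting this standard input from CI homology, the remainder is a routine assembly of flat base change and Gulliksen's theorem.
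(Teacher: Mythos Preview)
The paper does not supply its own proof of this lemma; it is stated with a citation to \cite[1.3]{AvramovGasharovPeeva97} and used as a black box. Your outline is essentially the argument given in that reference (quasi-deformation through the completion for one direction, polynomial growth of Betti numbers via Eisenbud--Gulliksen operators plus Gulliksen's characterization for the other), so there is nothing in the paper to compare against.

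One cosmetic correction: your direction labels are swapped. The paragraph you mark $(\Leftarrow)$ assumes $R$ is a complete intersection and produces a quasi-deformation bounding $\cid_R(k)$; that is the forward implication $(\Rightarrow)$. The paragraph you mark $(\Rightarrow)$ begins with a quasi-deformation witnessing $\cid_R(k)<\infty$ and deduces that $R$ is a complete intersection; that is $(\Leftarrow)$. The mathematical content is otherwise correct.
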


\begin{lemma}\label[lemma]{lem:pullback-equiv-sring}
  If \(f,g \co R \to S\) is a pair of maps in \(\sRing\) which
  agrees in \(\Ho(\sRing)\), then \(S_f \simeq S_g\) as \(R\)-algebras.
\end{lemma}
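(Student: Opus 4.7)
The plan is to reduce to \cref{lem:pullback-equiv} via a cofibrant replacement of $R$, and then transport the resulting equivalence back to $\sRing^R$ using the Quillen equivalence supplied by left-properness (\cref{rem:left-proper}).

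First I would choose a cofibrant replacement $q \co \widetilde{R} \ontosim R$ in $\sRing$. The standard Quillen model structure on simplicial commutative rings is left proper and has every object fibrant, so both hypotheses needed downstream are in force. Since $\widetilde{R}$ is cofibrant and $S$ is fibrant, \cref{rem:cyl-simp} converts the assumption $f = g$ in $\Ho(\sRing)$ into a cylinder-homotopy $fq \Cylsim gq$ after precomposition with $q$. Applying \cref{lem:pullback-equiv} to this pair then yields an equivalence $S_{fq} \simeq S_{gq}$ as objects of $\sRing^{\widetilde{R}}$.

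To descend from $\widetilde{R}$ to $R$, I would invoke \cref{rem:left-proper}: left-properness guarantees that restriction along the weak equivalence $q$ is a Quillen equivalence $q^* \co \sRing^R \to \sRing^{\widetilde{R}}$. By construction this functor sends $S_f$ to $S_{fq}$ and $S_g$ to $S_{gq}$, so under the induced equivalence of homotopy categories the equivalence of the previous step transfers to $S_f \simeq S_g$ as $R$-algebras. There is no genuine obstacle in the argument; the only nontrivial background is the standard fact that Quillen's model structure on $\sRing$ is left proper and has all objects fibrant, after which the proof is a direct assembly of \cref{rem:cyl-simp}, \cref{lem:pullback-equiv}, and \cref{rem:left-proper}.
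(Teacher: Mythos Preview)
Your argument is correct and follows essentially the same route as the paper's proof: both take a cofibrant replacement of $R$, use \cref{rem:cyl-simp} and \cref{lem:pullback-equiv} to obtain the equivalence over $\widetilde{R}$, and then descend via \cref{rem:left-proper}. The only difference is that the paper briefly justifies left-properness of $\sRing$ by appealing to \cref{rem:pushouts-ring-mod}, whereas you cite it as a standard fact.
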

\begin{proof}
  First, note that \(\sRing\) is left-proper. This follows immediately
  from \cref{rem:pushouts-ring-mod}, since the tensor product of
  modules is left-Quillen. Second, note that all objects in
  \(\sRing\) are fibrant. If \(p \co \widetilde{R} \ontosim R\) is a
  cofibrant replacement, then by \cref{rem:cyl-simp} and
  \cref{lem:pullback-equiv}, \(S_{fp} \simeq S_{gp}\) over
  \(\widetilde{R}\) and hence \(S_f \simeq S_g\) over \(R\) by
  \cref{rem:left-proper}.
\end{proof}

\begin{lemma}\label[lemma]{lem:bounded-tor}
  Let \(Q \to R\) be a map in \(\Ring_k\) with finite flat dimension.
  If \(K\) is a bounded complex of flat \(R\) modules, then
  \(\Tor^Q_*(k,K)\) is bounded.
\end{lemma}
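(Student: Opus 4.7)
The plan is to show that $K$ has finite flat dimension over $Q$, from which boundedness of $\Tor^Q_*(k, K)$ is immediate. Setting $n = \operatorname{fd}_Q R$, the crux is to propagate this bound from $R$ to every flat $R$-module: I claim $\operatorname{fd}_Q F \le n$ for any flat $R$-module $F$. This I would prove using the Cartan change-of-rings spectral sequence
\[
E^2_{p,q} = \Tor^R_p\bigl(\Tor^Q_q(M, R),\, F\bigr) \Longrightarrow \Tor^Q_{p+q}(M, F),
\]
valid for any $Q$-module $M$. Since $F$ is $R$-flat, the $E^2$-page is concentrated on the row $p = 0$ and the spectral sequence collapses to $\Tor^Q_i(M, F) \cong \Tor^Q_i(M, R) \otimes_R F$, which vanishes for $i > n$. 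Alternatively, Lazard's theorem writes $F$ as a filtered colimit of finitely generated free $R$-modules, from which the bound follows because $\Tor$ commutes with filtered colimits and finite direct sums.

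With this in hand, the support of $K$ consists of finitely many degrees, and each term $K_j$ admits a $Q$-flat resolution $P^{(j)}_\bullet \to K_j$ of length at most $n$. I would assemble these into a Cartan--Eilenberg double complex and pass to the total complex to obtain a bounded $Q$-flat resolution $\tilde{K} \to K$; then
\[
\Tor^Q_*(k, K) \cong \h_*(k \otimes_Q \tilde{K})
\]
is supported in a bounded range of degrees, completing the proof.

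The argument is entirely routine and no step presents a genuine obstacle. The only conceptual point worth flagging is the recognition that $R$-flatness of the terms of $K$, together with finite $Q$-flat dimension of $R$, is already enough to force finite $Q$-flat dimension on every term of $K$---a mild transitivity statement for flat dimension that is the content of the collapse of the change-of-rings spectral sequence.
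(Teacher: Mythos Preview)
Your proof is correct and rests on the same key tool as the paper's, the change-of-rings spectral sequence
\[
E^2_{p,q} = \Tor^R_p\bigl(\Tor^Q_q(-, R),\, -\bigr) \Longrightarrow \Tor^Q_{p+q}(-,-),
\]
but you organize the argument differently. The paper applies this spectral sequence once, directly with \(M = k\) and the full complex \(K\) in the second slot: since \(\operatorname{fd}_Q R < \infty\) the \(E^2\)-page is bounded in \(q\), and since \(K\) is a bounded complex of flat \(R\)-modules it is bounded in \(p\), so the abutment is bounded and the result follows in one stroke. You instead apply the spectral sequence (or Lazard) termwise to each \(K_j\) to conclude \(\operatorname{fd}_Q K_j \le n\), and then invoke a Cartan--Eilenberg assembly to produce a bounded \(Q\)-flat resolution of \(K\). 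Your route is slightly longer but yields the marginally stronger intermediate statement that \(K\) itself has finite flat dimension over \(Q\), rather than only the boundedness of \(\Tor^Q_*(k,K)\); the paper's route is more economical since that extra strength is not needed downstream.
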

\begin{proof}
  Note that \(k \lotimes_Q K \simeq (k \lotimes_Q R) \lotimes_R K\)
  and consider the associated Tor spectral sequence \citestacks{0662}
  \[ E^2_{p,q} = \Tor^R_p(\Tor^Q_q(k, R), K) \Rightarrow \Tor^Q_{p + q}(k,K). \] %
  By hypothesis, \(E^2_{p,q} = 0\) for all \(q \gg 0\). Likewise,
  since \(K\) is a bounded complex of flat \(R\) modules,
  \(E^2_{p,q} = 0\) for all \(p \gg 0\). Thus \(E\) is a bounded
  spectral sequence, so \(\Tor^Q_*(k, K)\) must be bounded.
\end{proof}

When \(\phi \co R \to R\) is an endomorphism in \(\Ring_k\), write
\(K^R_t\) for \(K^R\) viewed as an \(R\) algebra via the trivial map
\(R \to k \to K^R\). For \(n > 0\), write \(K^R_n\) for \(K^R\) viewed
as an \(R\)-algebra via \(R \xto{\phi^n} R \to K^R\).

\begin{lemma}\label[lemma]{lem:ghost-koszul-trivial}
  Let \((R,k)\) be a noetherian local ring and \(\phi\) a local
  endomorphism such that \(K^\phi\) is ghost. For all \(n \gg 0\),
  \(K^R_n \simeq K^R_t\) as \(R\)-algebras.
\end{lemma}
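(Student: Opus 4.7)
The plan is to apply Theorem~\ref{thm:ghost-ring} to an iterate of $K^\phi$ and then translate the resulting nullhomotopy into an equivalence of $R$-algebra structures via Lemma~\ref{lem:pullback-equiv-sring}.

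First, I would check that $K^R$, viewed as a supplemented $k$-algebra via the augmentation $K^R \to \pi_0(K^R) = R/\mfm = k$, falls within the scope of Theorem~\ref{thm:ghost-ring}. Connectedness is immediate, and Remark~\ref{rem:Koszul-is-Koszul} gives $\pi_i(K^R) = 0$ for all $i > d$, where $d$ denotes the size of a minimal generating set of $\mfm$. Hence for every $n$ with $2^n > d$, any composition of $n$ ghost endomorphisms of $K^R$ in $\sRing(k)$ is nullhomotopic.

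Next, I would use the natural identification $k \lotimes_k K^R \simeq K^R$ to regard $K^\phi$ as an endomorphism of $K^R$ in $\Ho(\sRing(k))$, so that $K^{\phi^n}$ is the $n$-fold iterate $K^\phi \circ \cdots \circ K^\phi$. By hypothesis each factor is ghost, so Theorem~\ref{thm:ghost-ring} applies and $K^{\phi^n}$ factors through $k$ in $\Ho(\sRing(k))$.

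The final step is to transfer this to the $R$-algebra level. The construction of $K^\phi$ in Construction~\ref{con:Koszul} is engineered so that the square
\[
  \begin{tikzcd}
    R \ar[r] \ar[d, "\phi"'] & K^R \ar[d, "K^\phi"] \\
    R \ar[r] & K^R
  \end{tikzcd}
\]
commutes in $\Ho(\sRing)$, where the horizontal arrows are the canonical structure maps. Iterating, the structure map of $K^R_n$ agrees in $\Ho(\sRing)$ with the composite $R \to K^R \xto{K^{\phi^n}} K^R$. Since $K^{\phi^n}$ factors through $k$ in $\Ho(\sRing(k))$, this composite also agrees with $R \to k \to K^R$, which is the structure map of $K^R_t$. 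Lemma~\ref{lem:pullback-equiv-sring} then delivers $K^R_n \simeq K^R_t$ as $R$-algebras.

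The main obstacle is verifying the compatibility square relating $\phi$ to $K^\phi$ via the structure maps. Unwinding Construction~\ref{con:Koszul}, one needs to chase through the identifications $K^R \simeq K^{\hat{R}} \simeq k \lotimes_Q \hat{R}$ and check that the lift $\tilde{\phi}\co Q \to Q$ of $\phi$ induces, after applying $k \lotimes_Q -$, a map that recovers the original $R \xto{\phi} R \to K^R$ up to homotopy. Once this (essentially formal) check is made, the rest of the argument assembles directly from the lemmas above.
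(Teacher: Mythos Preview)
Your outline is essentially the paper's proof: iterate $K^\phi$ to obtain a chain of ghost maps ending in a copy of $K^R$ (connected, with homotopy bounded by the embedding dimension), apply \cref{thm:ghost-ring}, and then use \cref{lem:pullback-equiv-sring} to turn the resulting nullhomotopy of the structure map into the equivalence $K^R_n \simeq K^R_t$.

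The one place your write-up is looser than the paper's is the ``natural identification $k \lotimes_k K^R \simeq K^R$''. In \cref{con:Koszul} the map $k \to k$ appearing in that tensor is the endomorphism of the residue field induced by $\phi$, which need not be the identity (e.g.\ the Frobenius over an imperfect residue field), so $K^\phi$ is not literally a self-map of $K^R$ in $\sRing(k)$ and cannot be iterated on the nose. The paper handles this by base-changing every term up to $k_{\phi^n}$, setting $K_i = k_{\phi^n} \lotimes_{k_{\phi^i}} K_{\phi^i}$; then each $K_i \to K_{i+1}$ is a genuine map in $\sRing(k_{\phi^n}) = \sRing(k)$, each is ghost (as a flat base change of $K^\phi$), and the chain terminates at $K_n = K_{\phi^n}$, whose $R$-algebra structure map is exactly the one to which \cref{lem:pullback-equiv-sring} is applied. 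Once you make this adjustment, your argument and the paper's coincide.
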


\begin{proof}
  Let \(\mfm\) be the maximal ideal of \(R\) and \(k\) the residue
  field, and write \(K = K^R\) for the Koszul complex. Suppose
  \(\mfm\) is minimally generated by \(d\) elements and fix
  \(n > \log_2(d)\). Iterating the construction of \(K^\phi\) gives a
  sequence of ghost maps
  \[ K_0 \to \cdots \to K_n, \]
  where \(K_i = k_{\phi^n} \lotimes_{k_{\phi^i}} K_{\phi^i}\). Since
  \(K_n = K_{\phi^n}\) has the same underlying simplicial ring as \(K\),
  we observe (\cref{rem:Koszul-is-Koszul})
  \[
    \begin{cases}
      \pi_0(K_n) = k_{\phi^n}  \\
      \pi_i(K_n) = 0 & i > d.
    \end{cases}
  \]
  By \cref{thm:ghost-ring}, the composite \(K_0 \to K_n\) is
  nullhomotopic, and hence \(R \to K_n\) is nullhomotopic
  by restriction. By \cref{lem:pullback-equiv-sring}, we see that
  \(K_n \simeq K_t\) as \(R\)-algebras.
\end{proof}

\begin{theorem}\label[theorem]{thm:kunzish}
  Let \(R\) be a noetherian local ring and \(\phi\) a local
  endomorphism such that \(K^\phi\) is ghost.
  \begin{enumerate}[label=\roman*)]
      \item\label{enum:ffd-reg} If \(\phi\) has finite flat dimension, then \(R\) is regular.
      \item\label{enum:fcid-ci} If \(\phi^n\) has finite CI-dimension
    for some \(n \gg 0\), then \(R\) is a complete intersection.
  \end{enumerate}
\end{theorem}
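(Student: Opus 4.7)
The plan is to combine the equivalence $K^R_n \simeq K^R_t$ of $R$-algebras from \cref{lem:ghost-koszul-trivial} (valid for $n \gg 0$) with the boundedness supplied by \cref{lem:bounded-tor}, and then to exploit the fact that $K^R_t$ has $R$-action factoring through the augmentation $R \to k$. Because $K^R$ is a bounded simplicial $k$-algebra with $\pi_0(K^R) = k$ nonzero, a Künneth-style computation isolates a residue-field-like $\Tor$ factor, and the hypotheses in (i) and (ii) will force this factor to be bounded. Under the trivial quasi-deformation $R \to R \from R$, this yields $\pd_R(k) < \infty$ and hence regularity via Auslander--Buchsbaum--Serre; under a quasi-deformation supplied by finite CI-dimension, it yields $\cid_R(k) < \infty$ and hence complete intersection via \cref{lem:cid-k}.

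For part (i), the hypothesis that $\phi$ has finite flat dimension implies the same for $\phi^n$. Since $\phi$ is local, $\phi^n$ lies in $\Ring_k$, and \cref{lem:bounded-tor} applied to $\phi^n \co R \to R$ with $K = K^R$ (a bounded complex of free $R$-modules under the natural action) shows that $\Tor^R_*(k, K^R_n)$ is bounded. The equivalence $K^R_n \simeq K^R_t$ converts this into boundedness of $\Tor^R_*(k, K^R_t)$. Since the $R$-action on $K^R_t$ factors through $R \to k$, flat base change followed by Künneth over the field $k$ gives
\[
  \Tor^R_n(k, K^R_t) \;\cong\; \bigoplus_{p+q=n} \Tor^R_p(k, k) \otimes_k \pi_q(K^R).
\]
With $\pi_0(K^R) = k \ne 0$ and $\pi_*(K^R)$ bounded, boundedness in $n$ forces $\pd_R(k) < \infty$, so $R$ is regular.

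For part (ii), fix a quasi-deformation $R \to R' \from Q$ witnessing finite CI-dimension of $\phi^n$, so that $\pd_Q(R' \otimes_R R_{\phi^n}) < \infty$; note also $\pd_Q(R') < \infty$ since the kernel of $Q \to R'$ is generated by a regular sequence, and that $k_Q \cong k_{R'}$. Set $\bar{R'} \ceq R' \otimes_R k$. In each simplicial degree $K^R_n \otimes_R R'$ is free over $R'$, so \cref{lem:bounded-tor} applied to $Q \to R'$ shows $\Tor^Q_*(k_Q, K^R_n \otimes_R R')$ is bounded. Flatness of $R \to R'$ promotes the equivalence $K^R_n \simeq K^R_t$ to an $R'$-algebra equivalence $K^R_n \otimes_R R' \simeq K^R \otimes_k \bar{R'}$, and the latter is $\bar{R'}$-flat with $Q$-action factoring through $\bar{R'}$. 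Flat base change along $Q \to \bar{R'}$ and Künneth over $k$ then yield
\[
  \Tor^Q_n(k_Q, K^R \otimes_k \bar{R'}) \;\cong\; \bigoplus_{p+q=n} \Tor^Q_p(k_Q, \bar{R'}) \otimes_k \pi_q(K^R).
\]
As before, boundedness on the left together with $\pi_0(K^R) = k$ forces $\pd_Q(\bar{R'}) < \infty$, so the same quasi-deformation $R \to R' \from Q$ exhibits $\cid_R(k) < \infty$, whence $R$ is a complete intersection by \cref{lem:cid-k}.

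The main technical care goes into ensuring that the $R$-algebra equivalence of \cref{lem:ghost-koszul-trivial} descends to an $R'$-algebra equivalence after $- \otimes_R R'$, for which flatness of $R \to R'$ is essential in order to keep this a derived tensor, and into tracking how the $R$-module structures on $K^R$ interact under the natural, $\phi^n$-twisted, and trivial actions so that the identification $K^R_n \otimes_R R' \simeq K^R \otimes_k \bar{R'}$ is correctly set up. Once these bookkeeping issues are in hand, both Künneth splittings and the final extraction of $\pd$-finiteness are routine.
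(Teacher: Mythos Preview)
Your argument for part (i) is correct and coincides with the paper's proof.

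In part (ii) there is a genuine gap in the step where you invoke \cref{lem:bounded-tor}. You assert that ``in each simplicial degree $K^R_n \otimes_R R'$ is free over $R'$'', but this is false in general: in simplicial degree $m$ the module $(K^R_n)_m$ is $R^{N_m}$ with $R$ acting through $\phi^n$, so $(K^R_n \otimes_R R')_m \cong (R' \otimes_R R_{\phi^n})^{N_m}$, which is free over $R' \otimes_R R_{\phi^n}$ but has no reason to be flat over $R'$. A telling symptom is that your argument then uses only the fact that $Q \to R'$ has finite flat dimension (true for \emph{every} quasi-deformation) and never uses the hypothesis $\pd_Q(R' \otimes_R R_{\phi^n}) < \infty$; if valid, this would show that any local ring admitting a local endomorphism with ghost $K^\phi$ is a complete intersection, with no finiteness assumption on $\phi^n$ whatsoever.

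The fix is exactly what the paper does: apply \cref{lem:bounded-tor} to the map $Q \to R' \otimes_R R_{\phi^n}$, which has finite flat dimension by the CI-dimension hypothesis, and view $R' \otimes_R K_{\phi^n}$ as a bounded complex of free $R' \otimes_R R_{\phi^n}$-modules. This yields boundedness of $\Tor^Q_*(k', R' \otimes_R K_{\phi^n})$, after which your K\"unneth manipulation and the conclusion $\pd_Q(R' \otimes_R k) < \infty$ go through as you wrote.
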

\begin{proof} Write \(K = K^R\). In both cases we apply
  \cref{lem:ghost-koszul-trivial} to obtain \(K_n \simeq K_t\).

  (i) Suppose \(\phi\) has finite flat dimension. Then \(\phi^n\) has
  finite flat dimension, so \cref{lem:bounded-tor} implies that
  \(\Tor^R_*(k, K_n)\) is bounded. Observe that
  \[
    \Tor^R_*(k,K_n)
    \cong \Tor^R_*(k, K_t)
    \cong \pi_*\left(k \lotimes_R k \otimes_k K\right) 
    \cong \Tor^R_*(k,k) \otimes_k \pi_*(K)
  \]
  We conclude that \(\Tor^R(k,k)\) is bounded, and hence \(R\) is
  regular by the Auslander-Buchsbaum-Serre theorem.

  (ii) Suppose \(\phi^n\) has finite CI-dimension. Then there
  exists a flat local extension \(R \to R'\) with residue field
  \(k'\) and a deformation \(Q \to R'\) such that
  \(R' \otimes_R R_{\phi^n}\) has finite flat dimension over
  \(Q\). Since \(K_{\phi^n}\) is a bounded complex of free
  \(R_{\phi^n}\)-modules,
  \(R' \otimes_R K_{\phi^n} \cong (R' \otimes_R R_{\phi^n})
  \otimes_{R_{\phi^n}} K_{\phi^n}\) is a bounded complex of free
  \(R' \otimes_R R_{\phi^n}\)-modules. Thus applying
  \cref{lem:bounded-tor}, we see that
  \(\Tor^Q_*\left(k', R' \otimes_R K_{\phi^n}\right)\) is bounded.
  Analogously to the proof of (i),
  \[
    \Tor^Q_*\left(k', R' \otimes_R K_{\phi^n}\right)
    \cong \Tor^Q_*\left(k', R' \otimes_R k\right) \otimes_k \pi_*(K).
  \]
  We conclude that \(R' \otimes_R k\) has finite projective
  dimension over \(Q\), and hence \(k\) has finite CI dimension over
  \(R\). The result follows by \cref{lem:cid-k}.
\end{proof}

\begin{corollary}[Kunz-Rodicio]\label{cor:kunz}\cite{Kunz69,Rodicio88}
  Suppose \(R\) is a noetherian ring of prime characteristic and let
  \(F\) denote the Frobenius endomorphism. The following are equivalent.
  \begin{itemize}
      \item \(R\) is regular
      \item \(F\) is flat
      \item \(F\) has finite flat dimension
  \end{itemize}
\end{corollary}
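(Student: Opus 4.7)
The plan is to close the cycle of three implications. The implication $F$ flat $\Rightarrow$ $F$ has finite flat dimension is immediate, and $R$ regular $\Rightarrow$ $F$ flat is the original direction of Kunz, which I would handle by the standard argument: flatness is local, and for a regular local ring one invokes the Cohen structure theorem to write $\hat R \cong k[[x_1,\ldots,x_d]]$, on which the flatness of the Frobenius is a direct verification (via the monomial basis $\{x^\alpha : 0 \le \alpha_i < p\}$, together with the elementary observation that $k$ is flat over $k^p$); faithful flatness of completion transfers this back to $R$. The substantive content of the corollary is the reverse implication $F$ has finite flat dimension $\Rightarrow$ $R$ regular, which I propose to deduce from \cref{thm:kunzish}\ref{enum:ffd-reg} after a local reduction.

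For the reduction, I would first argue that finite flat dimension of $F$ passes to localizations: for any prime $\mfp$, one has $F^{-1}(\mfp) = \mfp$ (since $\mfp$ is radical), so $(R_F)_\mfp \cong (R_\mfp)_{F_\mfp}$ as $R_\mfp$-modules, and localizing a flat resolution of $R_F$ at $\mfp$ produces a flat resolution of $(R_\mfp)_{F_\mfp}$ over $R_\mfp$. Since regularity can be detected at maximal ideals, it then suffices to show that every localization $(R_\mfm, \mfm R_\mfm, k)$ --- a noetherian local ring of characteristic $p$ with Frobenius of finite flat dimension --- is regular. By \cref{thm:kunzish}\ref{enum:ffd-reg}, this reduces further to verifying that $K^{F_\mfm}$ is ghost. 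But \cref{rem:frob-is-koszul} identifies $K^{F_\mfm}$ with the derived relative Frobenius $k_F \lotimes_{Q_F} F_{\hat R_\mfm|Q}$ for a minimal Cohen presentation $Q \to \hat R_\mfm$, and \cref{pro:frob-ghost}, applied to the augmented $Q$-algebra $\hat R_\mfm$, asserts precisely that this derived relative Frobenius is ghost.

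The main obstacle is really just bookkeeping: one must verify the compatibility of the Frobenius with localization, confirm that regularity and flatness both pass through maximal ideals in the expected way, and unpack the identification in \cref{rem:frob-is-koszul} to see that \cref{pro:frob-ghost} applies directly in the required form. All substantive mathematical work has already been carried out in \cref{thm:kunzish} and \cref{pro:frob-ghost}, so the proof amounts to assembling these pieces together with the classical Kunz direction.
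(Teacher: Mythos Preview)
Your proposal is correct and follows essentially the same route as the paper: both reduce to the local case, handle the ``regular $\Rightarrow$ flat'' direction via the Cohen structure theorem and the explicit monomial basis for Frobenius on a power series ring, and deduce the hard implication by combining \cref{rem:frob-is-koszul} with \cref{pro:frob-ghost} to see that $K^F$ is ghost and then invoking \cref{thm:kunzish}\ref{enum:ffd-reg}. You are slightly more explicit about the localization bookkeeping (and correctly flag the $k/k^p$ issue in the freeness claim), but the substance is identical.
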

\begin{proof}
  Let \(p = \chr R\). Since flatness and regularity are local
  properties and unaffected by completion, we may assume that \(R\) is
  a complete local ring with residue field \(k\). Fix a minimal Cohen
  presentation \(Q \to R\) where
  \(Q = k \ldsq x_1, \ldots, x_d \rdsq\).
  
  Suppose \(R\) is regular. Then \(R = Q\) and we directly compute
  that \(Q_F\) is the free \(Q\)-module on the basis
  \(\{ x_1^{n_1} \cdots x_d^{n_d} \mid 0 \le n_i < p\}\).

  Now suppose \(F\) has finite flat dimension. By
  \cref{rem:frob-is-koszul}, \(K^F\) is given by
  \(k_F \lotimes_{Q_F} F_{\hat{R}|Q}\) and hence \cref{pro:frob-ghost}
  states that \(K^F\) is ghost, so \cref{thm:kunzish} implies \(R\) is
  regular.
\end{proof}

Using part (ii) of \cref{thm:kunzish} we may establish an analogous
result for complete intersection rings.

\begin{corollary}\label[corollary]{cor:kunz-ci}
  Suppose \(R\) is a noetherian ring of prime characteristic and let
  \(F\) denote the Frobenius endomorphism. If \(F^n\) has finite CI
  dimension for some \(n \gg 0\) then \(R\) is a locally complete
  intersection ring.
\end{corollary}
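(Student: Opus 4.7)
The plan is to mirror the proof of \cref{cor:kunz} almost verbatim, substituting part \ref{enum:fcid-ci} of \cref{thm:kunzish} for part \ref{enum:ffd-reg}. Being a locally complete intersection is, by definition, a property that can be checked prime-by-prime, so I would first fix a prime $\mathfrak{p} \subset R$ and reduce to showing that $R_\mathfrak{p}$ is a complete intersection. Since the Frobenius commutes with localization, $(F_R)_\mathfrak{p} = F_{R_\mathfrak{p}}$, and finiteness of CI dimension is preserved under localization (one may localize a quasi-deformation $R \to R' \leftarrow Q$ at a suitable prime above $\mathfrak{p}$ to obtain a quasi-deformation witnessing finite CI dimension of $F_{R_\mathfrak{p}}^n$). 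Thus we may assume $R$ is local from the outset.

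Next I would pass to the completion, using that both the complete intersection property and finiteness of CI dimension are faithfully preserved by $\mathfrak{m}$-adic completion for a noetherian local ring. So assume further that $R$ is a complete noetherian local ring of prime characteristic $p$, and fix a minimal Cohen presentation $Q \to R$ with $Q = k\ldsq x_1, \ldots, x_d \rdsq$ as in the proof of \cref{cor:kunz}.

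The core of the argument is then immediate from the pieces already assembled in the paper. By \cref{rem:frob-is-koszul}, the map $K^F$ on the simplicial Koszul complex is identified with $k_F \lotimes_{Q_F} F_{\hat{R}|Q}$, and by \cref{pro:frob-ghost} this map is ghost (since the relative Frobenius $F_{\hat{R}|Q}$ is ghost, and ghost maps are preserved under derived base change along $k_F \lotimes_{Q_F} -$, as vanishing on \andq\ homology is preserved under such extension of scalars). Combining this with the hypothesis that $F^n$ has finite CI dimension for some $n \gg 0$, we may invoke \cref{thm:kunzish}\ref{enum:fcid-ci} to conclude that $R$ is a complete intersection, completing the proof.

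The main obstacle, as in the classical proof of Kunz's theorem, is really packaged elsewhere: namely, verifying that finite CI dimension descends through localization and completion in a form strong enough to feed into \cref{thm:kunzish}. Once that reduction is granted, the rest of the proof is a clean citation of \cref{pro:frob-ghost}, \cref{rem:frob-is-koszul}, and \cref{thm:kunzish}\ref{enum:fcid-ci}, so no new ingredients beyond those developed in the paper are required.
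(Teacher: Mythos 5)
Your core argument is the intended one: the paper offers no separate proof of \cref{cor:kunz-ci} beyond the remark that it follows from part (ii) of \cref{thm:kunzish}, and your route---reduce to the local case, identify \(K^F\) with \(k_F \lotimes_{Q_F} F_{\widehat{R}|Q}\) via \cref{rem:frob-is-koszul}, conclude that it is ghost by \cref{pro:frob-ghost}, and invoke part (ii) of \cref{thm:kunzish}---is exactly that proof. The localization step is also fine: Frobenius commutes with localization, and a quasi-deformation localizes, so finiteness of CI-dimension passes to \(R_{\mathfrak{p}}\).

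The one step you should drop, because it is both unnecessary and the only place your write-up is on shaky ground, is the passage to the completion. \cref{thm:kunzish} is stated for an arbitrary noetherian local ring; completion is already built into the construction of \(K^R\) (\cref{con:Koszul} gives \(K^R \simeq K^{\widehat{R}}\)) and into \cref{rem:frob-is-koszul}, so you may apply the theorem directly to \(R_{\mathfrak{p}}\) with its Frobenius. By contrast, your claim that finiteness of the CI-dimension of \(F^n\) is ``faithfully preserved'' by \(\mathfrak{m}\)-adic completion is not something you can simply quote: when \(R\) is not \(F\)-finite, \(R_{F^n}\) is not a finitely generated \(R\)-module and \(\widehat{R} \otimes_R R_{F^n}\) need not agree with \((\widehat{R})_{F^n}\), so the standard completion invariance of CI-dimension for finite modules does not apply. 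Excise that step and the rest stands as a clean citation of \cref{rem:frob-is-koszul}, \cref{pro:frob-ghost}, and part (ii) of \cref{thm:kunzish}. A minor point: the parenthetical justification that ghostness is preserved under derived base change is not needed and is slightly off target---\cref{pro:frob-ghost} is stated precisely for the base-changed map \(\ell_F \lotimes_{k_F} F_{R|k}\), and ``\(F_{\widehat{R}|Q}\) is ghost'' on its own is not a notion the paper defines, since ghostness is defined for maps in \(\sRing(k)\).
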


\begin{remark}\label[remark]{rem:majadas}
  Given a local ring homomorphism \(\phi \co R \to R\) as above,
  Majadas \cite{Majadas16} defines \(\phi\) to have the
  \(h_2\)-\emph{vanishing} property if the induced map
  \[\AQ_2(k_\phi \dslash R) \to \AQ_2(k_\phi \dslash R_\phi)\]
  is zero. Using \cref{rem:kunzish-ghost}, the induced map on homotopy
  cofiber sequences
  \[
    \begin{tikzcd}
      \hat{R} \dslash \Lambda \ar[r] \ar[d] &
      k_\phi \dslash \Lambda \ar[r] \ar[d] &
      k_\phi \dslash \hat{R} \ar[d] \\
      \hat{R}_\phi \dslash \Lambda \ar[r] &
      k_\phi \dslash \Lambda \ar[r] &
      k_\phi \dslash \hat{R}_\phi,
    \end{tikzcd}
  \]
  and the calculation \(\AQ_2(k_\phi \dslash \Lambda) = 0\), one
  deduces that the ghost property for \(K^\phi\) implies the
  \(h_2\)-vanishing property for \(\phi\). With this, we see
  \cref{thm:kunzish} (i) follows from \cite[Prop 5]{Majadas16}.

  Blanco and Majadas \cite{BlancoMajadas98} prove a result similar to
  \cref{cor:kunz-ci} in terms of the CI-dimension of a Cohen
  factorization. In the \(F\)-finite case, one recovers
  \cref{cor:kunz-ci}. Denote this invariant \(\cid(\phi)\).
  Generalizing these ideas, Majadas shows \cite[Prop 8]{Majadas16}
  that if \(\phi\) satisfies \(h_2\)-vanishing and
  \(\cid(\phi) < \infty\) then \(R\) is a complete intersection
  ring. As far as the author is aware, these results are separated by
  two open problems. The first is whether \(\cid(\phi)\) and
  \(\cid_R(R_\phi)\) are simultaneously finite (see
  \cite[2.15]{Sather-Wagstaff08} and \cite{Sharif24}). The second is
  whether finiteness of \(\cid_R(R_\phi) < \infty\) implies
  \(\cid_R(R_{\phi^n}) < \infty\).
\end{remark}

\bibliographystyle{amsplain}
\bibliography{refs.bib}
\end{document}